\theoremstyle{plain}
\newtheorem{thm}{Theorem}
\newtheorem{lem}{Lemma}
\theoremstyle{definition}
\newtheorem{prop}{Proposition}
\newtheorem{cor}{Corollary}
\title{Isolated regions of a link projection}
\author{Tumpa Mahato\thanks{Department of Mathematics, Indian Institute of Science Education and Research, Pune, India. Email: tumpa.mahato@students.iiserpune.ac.in} and Ayaka Shimizu\thanks{Osaka Central Advanced Mathematical Institute (OCAMI), 3-3-138, Sugimoto, Sumiyoshi-ku, Osaka, 558-8585, Japan. Email: shimizu1984@gmail.com}}
\date{\today}
\begin{document}
\maketitle

\begin{abstract}
A set of regions of a link projection is said to be isolated if any pair of regions in the set share no crossings. 
The isolate-region number of a link projection is the maximum value of the cardinality for isolated sets of regions of the link projection. 
In this paper, all the link projections of isolate-region number one are determined. 
Also, estimations for welded unknotting number and combinatorial way to find the isolate-region number are discussed, and a formula of the generating function of isolated-region sets is given for the standard projections of $(2,n)$-torus links.
\end{abstract}

\section{Introduction}

A {\it knot} is an embedding of a circle in $S^3$ and a {\it link} is an embedding of some circles in $S^3$. 
We assume a knot to be a link with one component. 
A {\it link projection} is a regular projection of a link on $S^2$ such that any intersection is a double point where two arcs intersect transversely. 
We call such an intersection a {\it crossing}. 
We call each connected part of $S^2$ divided by a link projection a {\it region}. 
It is well known that the number of regions is greater than the number of crossings by two\footnote{See, for example, \cite{ahara-suzuki}.} for each connected link projection.

Two regions $x_a$ and $x_b$ of a link projection $L$ are said to be {\it connected} when they share a crossing on their boundaries. 
Otherwise, they are said to be {\it disconnected}. 
For example, the regions $x_1$ and $x_2$ are connected, whereas $x_1$ and $x_8$ are disconnected in Figure \ref{fig-7crossings}. 
A set of regions $W= \{ x_a, x_b, \dots , x_c \}$ is said to be {\it isolated}, or an {\it isolated-region set}, if any pair of regions $x_{\alpha}$ and $x_{\beta} \in W$ are disconnected. 
We assume that the empty set $\emptyset$ is isolated. 
The {\it isolate-region number}, $I(L)$, of a link projection $L$ is the maximum value of the cardinality for all isolated-region sets $W$ of $L$. 
For example, the isolate-region number of the knot projection $K$ in Figure \ref{fig-7crossings} is three. 
\begin{figure}[h]
\centering
\includegraphics[width=25mm]{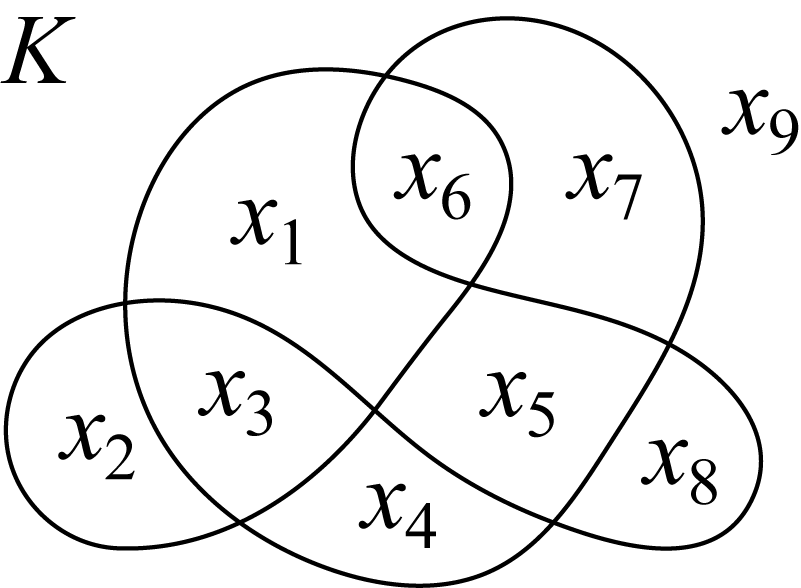}
\caption{$\{ x_3 , x_6 , x_8 \}$, $\{ x_1 , x_8 \}$ are isolated-region sets. }
\label{fig-7crossings}
\end{figure}

\noindent In \cite{warping-1}, a similar notion, the {\it independent region number} of a link projection $L$, denoted by $I\hspace{-1pt}R(L)$, was previously defined as follows: 
For a nontrivial link projection $L$, take a crossing $c$. 
Choose a set of independent regions of $L$ such that any of the regions does not have $c$ on the boundary. 
The independent region number $I\hspace{-1pt}R(L)$ is the maximum number of such regions for all crossings $c$. 
For example, the knot ptojection $K$ in Figure \ref{fig-7crossings} has independent region number three which is realized by taking the crossing shared by $x_1$, $x_3$, $x_4$ and $x_5$, and taking the regions $x_2$, $x_6$ and $x_8$. 
The independent region number was introduced to give lower and upper bounds for the ``warping degree'', $d(D)$, of a knot diagram $D$, which represents a complexity of a knot diagram  (\cite{lecture, warping-3}) as explained in Section \ref{section-wd} in this paper. 
The concept of the isolate-region number was used in the same paper without being named to give a lower bound of the independent region number and show the inequality
$$I(D)-1 \leq I\hspace{-1pt}R(D) \leq d(D) \leq c(D)-I\hspace{-1pt}R(D)-1 \leq c(D)-I(D)$$
for a non-trivial alternating knot diagram $D$, where $I(D)$, $I\hspace{-1pt}R(D)$ denote the isolate-region number, independent region number, respectively of the knot projection associated to $D$, and $c(D)$ denotes the number of crossings of $D$. 
In the above inequality, we can see that the independent region number $I\hspace{-1pt}R(D)$ gives better estimation for the warping degree than the isolate-region number $I(D)$. 
However, the definition of isolate-region number is simpler than the independent region number and suitable for calculation and investigation. 
Indeed, some properties of independent region number were obtained by finding natural properties of the isolate-region number in \cite{warping-1}. 
In this paper, we focus on the isolate-region number and prove the following theorem. 

\begin{thm}
A link projection $L$ has isolate-region number one if and only if $L$ is one of the link projections shown in Figure \ref{fig-IR1}.
\label{thm-IR1}
\end{thm}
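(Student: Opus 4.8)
The plan is to prove both implications, with the converse---that $I(L)=1$ forces $L$ into the short list of Figure \ref{fig-IR1}---carrying essentially all of the work. For the forward (easy) direction I would proceed projection-by-projection through Figure \ref{fig-IR1}: list the regions of each $L$ and check that every pair of them shares a crossing, so that no isolated-region set of size two exists while any singleton is isolated. For each of the finitely many pictures this is a direct inspection; for instance, for the standard trefoil projection with outer region $O$, central region $C$, and three bigons $B_1,B_2,B_3$, one verifies that each of the $\binom{5}{2}=10$ pairs occurs at one of the three crossings, so $I(L)=1$.

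For the converse, write $n$ for the number of crossings, and first reduce to connected projections. If $L$ is disconnected or contains a crossingless circle, then some region has no crossing on its boundary, and such a region is disconnected from every other region, forcing $I(L)\geq 2$; the case $n=0$ likewise gives $I(L)=2$. So I may assume $L$ is connected with $n\geq 1$, regarded as a $4$-valent graph on $S^2$ with $E=2n$ edges and, by the region count recalled in the introduction, $F=n+2$ regions. For a region $R$ let $b(R)$ denote its number of corners (the crossings on $\partial R$ counted with multiplicity), so that $\sum_R b(R)=4n$ since each crossing has four corners.

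The heart of the argument is a counting bound on $n$. The key local observation is that a region $R$ with $b(R)=k$ corners is connected to at most $2k$ other regions: across its (at most $k$) boundary edges it meets at most $k$ regions, and at its $k$ corners it meets at most $k$ further regions in the diagonally opposite corners. Since $I(L)=1$ forces $R$ to be connected to all $n+1$ other regions, every region satisfies $b(R)\geq (n+1)/2$; summing over the $n+2$ regions gives $(n+2)(n+1)\leq 8n$, hence $n\leq 4$. A parallel but sharper count of small faces removes $n=4$. A bigon is connected to at most four regions (its two edge-neighbours and the two diagonal corners at its two crossings), so a bigon already forces $n\leq 3$; a monogon is connected to at most two regions, forcing $n\leq 1$; and if $L$ has neither a monogon nor a bigon, then every face has at least three sides, whence $4n=\sum_R b(R)\geq 3(n+2)$ and $n\geq 6$. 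As $n=4$ admits no monogon and no bigon, it would require $n\geq 6$, a contradiction. Therefore $n\leq 3$.

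It then remains to enumerate the connected projections with $n\in\{1,2,3\}$ and to compute $I$ for each, matching the survivors to the pictures in Figure \ref{fig-IR1}. Here the same lemmas prune the list sharply: for $n=3$ no region may be a monogon, so a short classification of $4$-valent spherical graphs on at most three vertices leaves only finitely many candidates to test. I expect this enumeration to be the main obstacle, both because one must argue completeness of the list of \emph{embedded} $4$-valent spherical graphs (the embedding, not merely the abstract multigraph, determines the regions) and because a few reducible configurations---loops bounding larger regions, nugatory crossings---require individual attention. A secondary technical point is the boundary-walk estimate when a single crossing appears twice on $\partial R$, where the naive $2k$ bound must be re-examined; but such repetitions occur only at nugatory crossings, which are already disposed of by the monogon estimate. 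Once every surviving case is identified with a projection of Figure \ref{fig-IR1}, both directions are complete.
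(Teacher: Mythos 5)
Your bound on the crossing number is a genuinely different, and more elementary, route than the paper's. The paper's engine is Lemma \ref{lem-circ4}: using the Adams--Shinjo--Tanaka identity $2C_2+C_3=8+C_5+2C_6+\cdots$ and a six-case analysis, it splits any connected irreducible projection with at least four crossings into two tangles each containing a closed bigon or trigon, which yields a disconnected pair of regions (Corollary \ref{cor-4-2}); unconnected and reducible projections are handled separately by exhibiting split-sum or connected-sum regions (Lemma \ref{lem-conn-sum}, Corollary \ref{cor-reducible}). Your double count --- a $k$-gon is connected to at most $2k$ other regions, so $I(L)=1$ forces $2b(R)\ge n+1$ for each of the $n+2$ regions, whence $8n\ge (n+1)(n+2)$ and $n\le 4$, with $n=4$ then excluded because the absence of monogons and bigons would force $4n\ge 3(n+2)$ --- is correct, shorter, and does not need the irreducibility hypothesis. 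What the paper's heavier lemma buys is reuse elsewhere (it also yields the corollary that thirteen or more crossings force $I\ge 3$), which your count does not provide.

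That said, three points need repair before your argument is complete. First, your justification for the disconnected case fails as stated: a split sum of two projections that each carry crossings has no crossing-free region. The correct argument (the paper's Lemma \ref{lem-conn-sum}) is to pick one region from each split component, avoiding the region that touches both; those two regions share no crossing. Second, and more substantively, the terminal enumeration of connected projections with $n\in\{1,2,3\}$ is exactly the step you defer, and it is where the paper's reducibility reduction earns its keep: since a reducible projection with at least two crossings is a connected sum and hence has $I\ge 2$ (Corollary \ref{cor-reducible}), only irreducible connected projections with $n=2,3$ remain, and there is exactly one of each (the standard Hopf link and trefoil projections). Without that reduction you must classify all embedded $4$-valent spherical graphs on at most three vertices, reducible configurations included; you name this as the main obstacle but do not carry it out. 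Third, your conclusion that the crossingless circle has $I=2$ follows the letter of the definition (its two regions share no crossing), but it contradicts the theorem as stated, which includes that projection in Figure \ref{fig-IR1} as having isolate-region number one; this degenerate case has to be reconciled with the intended convention rather than asserted in the opposite direction.
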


\begin{figure}[h]
\centering
\includegraphics[width=50mm]{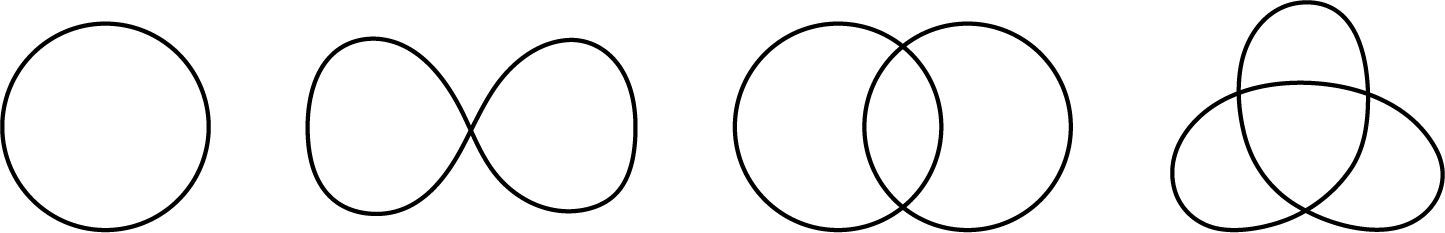}
\caption{All the link projections of isolate-region number one. }
\label{fig-IR1}
\end{figure}

\noindent The rest of the paper is organized as follows. 
In Section \ref{section-pf1}, the proof of Theorem \ref{thm-IR1} is given. 
In Section \ref{section-wd}, some properties of warping degree are discussed and an upper bound for the welded unknotting number for some special knot diagrams is given as the difference between the crossing number and the isolate-region number of the diagram. 
In Section \ref{section-graph}, a combinatorial method to find the isolate-region number using graphs is given. 
In Section \ref{section-function}, the generating function of isolated-region sets are discussed to see the distribution of the sets. 
In Section \ref{section-pf2}, a formula and recursive relationship of the generating functions for the standard projections of $(2,n)$-torus links are discussed.

\section{Proof of Theorem \ref{thm-IR1}} 
\label{section-pf1}

In this section, we prove Theorem \ref{thm-IR1}. 
For link projections $M$ and $N$, a {\it split sum} and a {\it connected sum} are link projections obtained from $M$ and $N$ as shown in Figure \ref{fig-composite}. 
We say a link projection $L$ is {\it connected} when $L$ is not a split sum of link projections. 
We say a link projection $L$ is {\it unconnected} when $L$ is not connected, namely, when $L$ is a split sum. 
Recall that we say two regions are {\it disconnected} when they share no crossings. 
We say a link projection $L$ is {\it composite}  when $L$ is a connected sum of two nontrivial link projections. 
We have the following lemma. 

\begin{figure}[h]
\centering
\includegraphics[width=85mm]{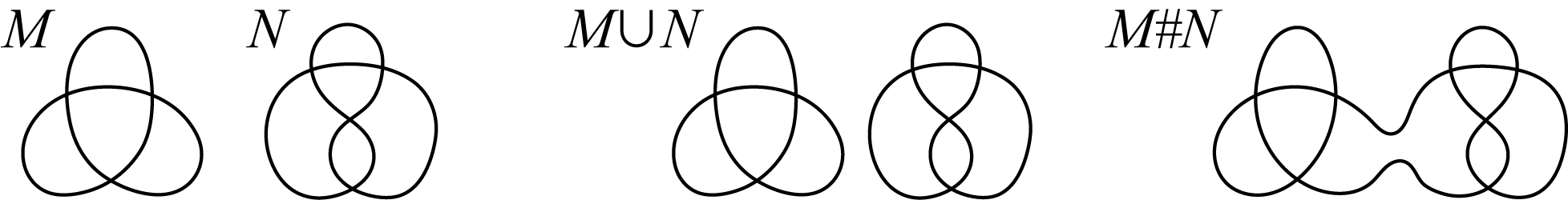}
\caption{A split sum $M \cup N$ and a connected sum $M \# N$ of knot projections $M$ and $N$. }
\label{fig-composite}
\end{figure}

\begin{lem}
Every unconnected or composite link projection $L$ has $I(L) \geq 2$. 
\label{lem-conn-sum}
\end{lem}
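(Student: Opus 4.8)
The plan is to treat the two cases uniformly—$L$ unconnected (a split sum $M \cup N$) and $L$ composite (a connected sum $M \# N$ of nontrivial projections)—by exhibiting two regions that share no crossing. In both cases there is a simple closed curve $\gamma$ on $S^2$, disjoint from the crossings of $L$, realizing the decomposition: $\gamma$ meets $L$ in no points in the split case and in exactly the two connecting strands (so two points) in the connected-sum case, and it separates $S^2$ into two disks $D_M$ and $D_N$ carrying the $M$-part and the $N$-part. Every crossing of $L$ lies in the interior of $D_M$ or of $D_N$, and the crossings in $D_M$ are exactly the crossings of $M$, those in $D_N$ exactly the crossings of $N$; in particular these two crossing sets are disjoint.

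First I would locate, on each side, a region of $L$ lying strictly inside that side, i.e. disjoint from $\gamma$. The key observation is that only a region through which $\gamma$ passes can meet $\gamma$, and since $\gamma \setminus L$ is a union of at most two arcs, at most two regions of $L$ meet $\gamma$ (one in the split case, two in the connected-sum case). To count the regions strictly inside $D_M$, I would cap off $D_M$ along $\gamma$ with a disk, in the connected-sum case reconnecting the two strand ends by an arc through the cap; this reconstructs exactly the projection $M$ on a sphere. The regions of $L$ strictly inside $D_M$ then correspond bijectively to the regions of $M$ that do not touch the cap, while the at most two regions meeting $\gamma$ are absorbed into the cap. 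Hence the number of regions strictly inside $D_M$ is at least (number of regions of $M$) $-\,2$, and symmetrically for $N$ inside $D_N$.

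Now I invoke the region count stated in the introduction. In the composite case $M$ is nontrivial, so it has at least one crossing and therefore at least $1+2=3$ regions; thus at least $3-2=1$ region of $L$ lies strictly inside $D_M$, and likewise for $N$. In the split case only one region meets $\gamma$ and $M$ already has at least two regions (a single simple closed curve bounds two), so again at least one region lies strictly inside each side. Choosing a region $x$ strictly inside $D_M$ and a region $y$ strictly inside $D_N$, every crossing on $\partial x$ is a crossing of $M$ and every crossing on $\partial y$ is a crossing of $N$; since these crossing sets are disjoint, $x$ and $y$ are disconnected. Thus $\{x,y\}$ is an isolated-region set of cardinality two, giving $I(L)\ge 2$.

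The main obstacle I expect is the region-counting step in the connected-sum case: I must argue carefully that capping $D_M$ along $\gamma$ genuinely reproduces the projection $M$ (so that the formula ``$c(M)+2$ regions'' applies) and that precisely the straddling regions—at most two—are the ones lost in passing from $M$ to the inside of $D_M$. The slack $3-2=1$ is tight, so the hypothesis that $M$ is nontrivial, which supplies the crucial third region, is exactly what makes the composite case work; the split case is easier because only a single region is lost and the bound ``at least two regions'' already suffices.
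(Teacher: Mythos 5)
Your proposal is correct and follows essentially the same route as the paper: pick one region wholly on the $M$-side and one wholly on the $N$-side, avoiding the region(s) shared across the decomposition, and observe they share no crossings. The paper states this in two lines; you additionally supply the counting argument (at most two straddling regions versus at least three regions of a nontrivial summand) that justifies why such interior regions exist, which is a worthwhile elaboration but not a different method.
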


\begin{proof}
When $L$ is a split sum of $M$ and $N$, take a region from $M$ and a region from $N$, neither of which are the region bounded by both $M$ and $N$. 
When $L$ is a connected sum of $M$ and $N$, take a region from $M$ and a region from $N$, neither of which are the shared region in the connected sum $M \# N$. 
Then, we have the inequality $I(L) \geq 2$. 
\end{proof}

\noindent A crossing of a link projection is said to be {\it reducible} if it has exactly three regions around it. 
A link projection $L$ is {\it reducible} when $L$ has a reducible crossing. 
We say $L$ is {\it irreducible} when $L$ is not reducible. 
Since a reducible link projection can be assumed to be a connected sum when it has at least two crossings, we have the following corollary. 

\begin{cor}
Every reducible link projection $L$ with two or more crossings has $I(L) \geq 2$. 
\label{cor-reducible}
\end{cor}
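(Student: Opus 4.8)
The plan is to deduce the corollary directly from Lemma~\ref{lem-conn-sum} by showing that the hypotheses force $L$ to be composite; the entire task then reduces to justifying the sentence preceding the statement, namely that a reducible projection with at least two crossings is a connected sum of two nontrivial link projections. Once that decomposition is in hand, Lemma~\ref{lem-conn-sum} immediately gives $I(L)\geq 2$.

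To produce the decomposition I would begin from a reducible crossing $c$ of $L$ and use the defining condition that exactly three regions meet $c$. Among the four local quadrants at $c$ this forces exactly one pair of quadrants to lie in a common region $R$, and in the relevant configuration these are two opposite quadrants, as realized at the central crossing of a figure-eight projection. Joining these two quadrants through $R$ and across the point $c$ produces a simple closed curve $\gamma$ with $\gamma\cap L=\{c\}$; in other words, $c$ is a nugatory crossing. Such a crossing is removed by an untwisting isotopy (a Reidemeister~I move performed on the projection), yielding a projection $L'$ with exactly one fewer crossing and exhibiting $L$ as the connected sum $L=L'\,\#\,F$, where $F$ denotes the one-crossing figure-eight projection.

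The two-crossing hypothesis is precisely what makes both summands nontrivial. The factor $F$ carries the crossing $c$, so it is nontrivial, while $L'$ retains $c(L)-1\geq 1$ crossings and is therefore nontrivial as well. Hence $L$ is a connected sum of two nontrivial projections, that is, $L$ is composite, and Lemma~\ref{lem-conn-sum} yields $I(L)\geq 2$.

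The step I expect to be the main obstacle is the geometric one: extracting the nugatory (connected-sum) structure cleanly from the purely local ``exactly three regions'' condition, and confirming that the untwisting drops the crossing count by exactly one so that $L'$ remains nontrivial. This is also where the hypothesis is genuinely used, as the boundary case illustrates: the figure-eight projection is itself reducible but has a single crossing, all three of its regions meet that crossing, so it is not composite and in fact has isolate-region number $1$. With the decomposition established, the remainder is an immediate appeal to Lemma~\ref{lem-conn-sum}.
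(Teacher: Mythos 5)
Your proposal is correct and follows exactly the paper's route: the paper derives this corollary from Lemma~\ref{lem-conn-sum} via the one-sentence remark that a reducible projection with at least two crossings can be assumed to be a connected sum, and you simply supply the details of that decomposition (the nugatory curve $\gamma$ through the reducible crossing and the resulting splitting $L=L'\,\#\,F$ with both summands nontrivial). The only slight imprecision is calling the untwisting a Reidemeister~I move, which literally applies only when one of the two loops at $c$ is a monogon; in general it is an untwisting of a whole loop, but the connected-sum decomposition and the crossing count $c(L')=c(L)-1$ hold regardless, so the argument stands.
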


\noindent Next, we divide a link projection into two tangles by giving a circle $C$, where $C$ intersects edges transversely. 
We assume each tangle $T$ is on a disk $D$ with $\partial D =C$, and call each connected region of $D$ divided by $T$ a {\it region} of $T$. 
In particular, if a region touches $\partial D$, we call it an {\it open region}, and otherwise we call it a {\it closed region}. 
We remark that any pair of closed regions in opposite sides of $C$ are disconnected. 
We show the following.

\begin{lem}
For any connected irreducible link projection with four or more crossings, we can draw a circle $C$ so that the tangle on each side of $C$ has a closed bigon or trigon. 
\label{lem-circ4}
\end{lem}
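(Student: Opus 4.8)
The plan is to reduce the statement to a purely combinatorial fact about small faces and then to build $C$ by a short topological construction. Throughout I view $L$ as a $4$-valent graph on $S^2$ with $V$ vertices (crossings), $E$ edges and $F$ regions, and I call a region a \emph{small face} if it is a bigon or a trigon. First I would set up the face count via Euler's formula. Since every crossing is $4$-valent, $2E=4V$, so $E=2V$ and $F=V+2$. Writing $f_k$ for the number of $k$-gon regions and combining $\sum_k k f_k = 2E = 4V$ with $\sum_k f_k = F = V+2$ gives
\[
\sum_k (4-k)\, f_k = 8 .
\]
Irreducibility means every crossing meets four distinct regions, so there are no monogons, i.e. $f_1=0$; hence $2f_2+f_3 = 8 + \sum_{k\ge 5}(k-4)f_k \ge 8$. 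In particular $L$ has at least four small faces, and this lower bound is the only quantitative input I will need.

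Second, I would reduce the lemma to the following claim: \emph{$L$ has two small faces $R$ and $R'$ whose closures $\overline R$ and $\overline{R'}$ are disjoint} (they share neither a vertex nor an edge). Granting the claim, $\overline R$ and $\overline{R'}$ are two disjoint closed disks in $S^2$, so $S^2\setminus(\overline R\cup\overline{R'})$ is an open annulus; let $C$ be a core circle of this annulus, perturbed to meet $L$ transversely and to avoid all crossings. Then $R$ lies in one disk bounded by $C$ and $R'$ in the other, and since $C$ avoids $\overline R$ and $\overline{R'}$ neither region is cut. Thus $R$ is a closed bigon or trigon of the tangle on one side and $R'$ is a closed bigon or trigon of the tangle on the other side, as required. (By the remark preceding the lemma these two closed regions are automatically disconnected, which is how the lemma will later be used to bound $I(L)$.) I would also flag here why \emph{disjoint} closures are genuinely needed: if $R$ and $R'$ merely shared a vertex $c$, any circle separating them would be forced, near $c$, either to pass through $c$ or to cross a boundary edge of $R$ or $R'$, so one of the two regions could not remain closed.

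Finally, the heart of the argument is the combinatorial claim, and this is where I expect the real work. I would argue by contradiction, assuming that the small faces form a \emph{pairwise intersecting} family: every two of them share a vertex or an edge. The goal is to show this is incompatible with $2f_2+f_3\ge 8$, $V\ge 4$, connectivity and irreducibility. The main tool is $4$-valence: each vertex is incident to exactly four regions, so a vertex can be shared by at most four small faces, and two \emph{distinct} small faces cannot have the same vertex set without producing several edges between a single pair of crossings, which (for $V\ge 4$) forces a reducible crossing or a disconnection. A useful concrete handle is a chain of bigons: any three consecutive bigons already contain a disjoint pair, so no long twist region can occur under the contradiction hypothesis. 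Using planarity, a pairwise intersecting family of disks on $S^2$ must either possess a common incident vertex or cluster around a bounded local configuration; in the first case the four-region bound at that vertex caps the total small-face weight well below $8$ unless the diagram degenerates to the two-vertex multigraph, and in the second case I would enumerate the few possible local pictures and check that each either violates irreducibility or connectivity or already contains a separable pair. Carrying out this case analysis cleanly — in particular ruling out dense clusters of trigons around two or three crossings — is the step I expect to be the main obstacle, and it is precisely where irreducibility (no nugatory crossings) and the hypothesis $V\ge 4$ do the essential work.
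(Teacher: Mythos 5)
Your setup is sound and matches the paper's in substance: the identity $\sum_k(4-k)f_k=8$ (with $f_1=0$ by irreducibility) is exactly the Adams--Shinjo--Tanaka inequality $2C_2+C_3\ge 8$ that the paper starts from, and your reduction of the lemma to the existence of two bigons/trigons with disjoint closures is correct --- two such faces are precisely a disconnected pair of small faces, and your annulus construction of $C$ is a clean (arguably cleaner) way to realize the separating circle than the paper's ``draw $C$ snugly around one small face.''

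The problem is that the combinatorial heart of the lemma --- the claim that two disconnected small faces actually exist --- is not proved; you explicitly defer the case analysis and flag it as ``the main obstacle.'' This is not a cosmetic omission: it is where essentially all of the paper's proof lives. Moreover, the route you sketch for it is shakier than you suggest. The assertion that a pairwise intersecting family of face-closures on $S^2$ must have a common vertex or ``cluster around a bounded local configuration'' is a Helly-type statement that is not automatic for disks meeting only in boundary points and would itself require proof. The paper instead proceeds by direct counting: a trigon has at most $6$ regions sharing a crossing with it and a bigon at most $4$, so whenever $C_2+C_3$ exceeds these local caps (the cases $C_2\le 2$, and $C_2=3$ with $C_3\ge 3$) a disconnected small face exists immediately; the genuinely delicate cases are $C_2=3,C_3=2$, $C_2=4$, and $C_2\ge 5$, where the count alone is insufficient and one must analyze the four regions $a,b,c,d$ surrounding a chosen bigon, using irreducibility and the hypothesis of at least four crossings to exclude the degenerate two- and three-crossing configurations. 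Your proposal correctly identifies the relevant tools (4-valence, chains of bigons, irreducibility, $V\ge 4$) but does not carry out this analysis, so as written the proof is incomplete.
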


\begin{proof}
We use the following formula of Adams, Shinjo and Tanaka from \cite{AST}, 
\begin{align*}
2C_2 +C_3 = 8 + C_5 + 2C_6 + 3C_7 + \dots
\end{align*}
for the number $C_k$ of $k$-gons of a connected irreducible link projection. 
From the formula, we have $2C_2 + C_3 \geq 8$. 
We will proceed by analyzing cases according to the number of bigons in the connected irreducible link projection with four or more crossings. \\

\noindent Case 1: Suppose $C_2 =0$. 
Using the inequality above, our supposition implies that $C_3 \geq 8$. 
Draw a circle $C$ around a trigon as shown in Figure \ref{fig-23}, (I\hspace{-0.2mm}I). 
Then the tangle in the figure has a closed trigon. 
Since $C_3 \geq 8 >7$ and  there are at most six regions around the trigon, the tangle on the other side has a closed trigon, too. \\
\begin{figure}[h]
\centering
\includegraphics[width=50mm]{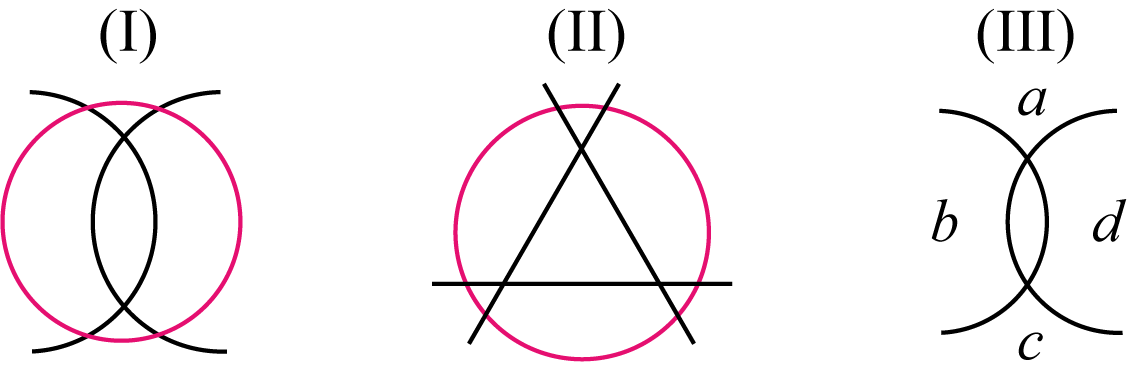}
\caption{(I) A circle around a bigon. (I\hspace{-0.2mm}I) A circle around a trigon. (I\hspace{-0.2mm}I\hspace{-0.2mm}I) Labeled regions $a, b, c, d$ around a bigon. }
\label{fig-23}
\end{figure}

\noindent Case 2: Suppose $C_2 =1$. 
Using the inequality above, our supposition implies that $C_3 \geq 6$. 
Draw a circle $C$ around the bigon as shown in Figure \ref{fig-23}, (I). 
Then, the tangle on the other side has closed trigons because $C_3 \geq 6 >4$ and there are at most four regions around a bigon. \\
\noindent Case 3: Suppose $C_2 =2$. 
Using the inequality above, our supposition implies that $C_3 \geq 4$. 
Draw $C$ as shown in Figure \ref{fig-23}, (I). 
Since $C_2 + C_3 \geq 6 >5$, the tangle on the other side has a closed bigon or trigon, too. \\
\noindent Case 4: Suppose $C_2 =3$. 
Using the inequality above, our supposition implies that $C_3 \geq 2$. 
We consider two subcases, first we consider $C_3 =2$ and second $C_3 \geq 3$. 
First suppose $C_3 =2$. 
Choose any bigon in the projection and see (I\hspace{-0.2mm}I\hspace{-0.2mm}I) in Figure \ref{fig-23}. 
The regions labeled $a$ and $c$ are either the same regions in the link projection or they are distinct regions. 
If $a$ and $c$ are the same region in the projection, we can draw a circle $C$ as in Figure \ref{fig-23}, (I) since we have $C_2 + C_3 =5 >4$ and the bigon has only three regions around it. 
If $a$ and $c$ are distinct regions in the projection, then neither $b$ nor $d$ is a bigon; 
if $b$ or $d$ is a bigon, then $a$ and $c$ are the same region in the projection. 
Suppose $a$ and $c$ are distinct, connected and both bigons. 
Then the projection will be a knot projection with only three crossings and be inconsistent with the condition of four or more crossings. 
Hence this does not happen. 
When $a$ and $c$ are distinct and connected, and either $a$ or $c$ is not a bigon, take $C$ as shown in Figure \ref{fig-23}, (I). 
Then the other side has a closed bigon, too. 
Next, suppose $a$ and $c$ are distinct regions and they are disconnected bigons. 
Then both $b$ and $d$ have the four crossings which are the crossings of the two distinct bigons $a$ and $c$ on their boundary. 
Hence $b$ and $d$ are $n$-gons for $n \geq 4$. 
Take $C$ as in Figure \ref{fig-23}, (I). 
Then the other side of $C$ has closed trigons.
Suppose $a$ and $c$ are disconnected and either $a$ or $c$ is not a bigon. 
Draw $C$ as in Figure \ref{fig-23}, (I). 
Then the other side has a closed bigon. 
Second, suppose $C_2 =3$ and $C_3 \geq 3$. 
We can draw $C$ as in Figure \ref{fig-23}, (I), because $C_2 + C_3 \geq 6 >5$. \\
Case 5: Suppose $C_2=4$. 
Using the inequality above, our supposition implies that $C_3 \geq 0$. 
Choose any bigon and see (I\hspace{-0.2mm}I\hspace{-0.2mm}I) in Figure \ref{fig-23}. 
If the regions $a$ and $c$ are the same region in the projection and it is a bigon, then the projection will be a link projection with only two crossings. 
Hence this is not allowed to happen. 
If $a$ and $c$ are the same region in the projection and it is not a bigon, we can draw $C$ as in Figure \ref{fig-23}, (I) since inside $C$ in the figure has at most three bigons. 
If $a$ and $c$ are distinct regions in the projection, we can draw $C$ as shown in Figure \ref{fig-23}, (I) since neither $b$ nor $d$ is a bigon, and inside $C$ in the figure has at most three bigons, too. \\
Case 6: Suppose $C_2 \geq 5$. 
Choose any bigon and see (I\hspace{-0.2mm}I\hspace{-0.2mm}I) in Figure \ref{fig-23}. 
If both $b$ and $d$ are bigons, then the projection will be a link projection with only two crossings. Hence at least one of $b$ and $d$ is an $n$-gon for $n \geq 3$. 
We can draw $C$ as in Figure \ref{fig-23}, (I) since inside $C$ in the figure has at most four bigons. 
\end{proof}

\noindent By Lemmas \ref{lem-conn-sum}, \ref{lem-circ4} and Corollary \ref{cor-reducible}, we have the following.

\begin{cor}
Any link projection $L$ with $4$ or more crossings has $I(L) \geq 2$. 
\label{cor-4-2}
\end{cor}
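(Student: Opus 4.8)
The plan is to split into cases according to the structural type of $L$ and, in each case, simply exhibit two disconnected regions. Since $L$ has four or more crossings, there are three relevant cases to cover: $L$ is unconnected, $L$ is connected and reducible, and $L$ is connected and irreducible. The first two are immediate consequences of the results already proved, and the third is where the preparatory work of Lemma \ref{lem-circ4} gets used.

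First I would dispose of the two easy cases. If $L$ is unconnected (a split sum), then Lemma \ref{lem-conn-sum} gives $I(L) \geq 2$ directly; the same conclusion follows from that lemma if $L$ happens to be composite. If instead $L$ is connected but reducible, then since $L$ has at least four, hence at least two, crossings, Corollary \ref{cor-reducible} yields $I(L) \geq 2$. The remaining and only genuinely new case is that $L$ is connected and irreducible.

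In this last case I would invoke Lemma \ref{lem-circ4}: because $L$ is connected, irreducible, and has four or more crossings, we may draw a circle $C$ so that the tangle on each side of $C$ contains a closed bigon or trigon. I would then choose one such closed region $R_1$ on one side of $C$ and one such closed region $R_2$ on the other side. By the remark preceding Lemma \ref{lem-circ4}, closed regions lying on opposite sides of $C$ are disconnected, so $\{R_1, R_2\}$ is an isolated-region set of cardinality two, and therefore $I(L) \geq 2$.

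I do not expect a serious obstacle here, since the real content of the argument lives entirely in Lemma \ref{lem-circ4} and this corollary is only the assembly. The single point deserving a moment of care is confirming that $R_1$ and $R_2$, defined as regions of the two tangles, are genuinely distinct regions of the whole projection $L$ and remain disconnected \emph{in} $L$ rather than merely within their respective disks. This is precisely what the remark guarantees: because $C$ meets only edges transversely, each crossing lies strictly inside one of the two disks, so no crossing on the boundary of $R_1$ can also lie on the boundary of $R_2$.
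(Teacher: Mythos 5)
Your proposal is correct and is precisely the argument the paper intends: the paper derives this corollary directly from Lemma \ref{lem-conn-sum}, Corollary \ref{cor-reducible}, and Lemma \ref{lem-circ4} with the same three-way case split (unconnected, connected reducible, connected irreducible), using the remark that closed regions on opposite sides of $C$ are disconnected. You have simply written out the assembly that the paper leaves implicit.
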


\noindent Now we prove Theorem \ref{thm-IR1}. \\

\noindent {\it Proof of Theorem \ref{thm-IR1}.}
Since every unconnected projection has isolate-region number two or more by Lemma \ref{lem-conn-sum}, it is sufficient to discuss connected projections. 
When the crossing number $n=0, 1$, there are just two link projections, the first and second knot projections in Figure \ref{fig-IR1}, and they have isolate-region number one. 
When $n=2,3$, we have just two reduced link projections, the third and fourth projections in Figure \ref{fig-IR1}, and they have isolate-region number one. 
By Corollaries \ref{cor-reducible} and \ref{cor-4-2}, there are no link projections of isolate-region number one for link projections with four or more crossings or reducible projections with two or more crossings. 
\hfill$\square$ \\

\noindent A {\it checkerboard coloring} is a coloring to a link projection by two colors such that each pair of regions sharing an edge have different colors. 
It is well known that every link projection admits a checkerboard coloring. 
In addition to Corollary \ref{cor-4-2}, we have the following proposition regarding the location of isolated regions. 

\begin{prop}
If an irreducible link projection has an $n$-gon for $n \geq 4$, there is a pair of disconnected regions around the $n$-gon. 
\end{prop}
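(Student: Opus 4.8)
The plan is to analyze the two disks cut out by the $n$-gon. Write $P$ for the $n$-gon and let $D^{*}=S^{2}\setminus \mathrm{int}\,P$ be the complementary disk, whose boundary $\partial P$ carries the crossings $c_{1},\dots,c_{n}$ and edges $e_{1},\dots,e_{n}$ of $P$ in cyclic order; for each edge $e_{i}$ let $R_{i}$ be the region meeting $P$ across $e_{i}$, and at each corner $c_{i}$ let $O_{i}$ be the region sitting diagonally opposite $P$. These are the regions ``around'' $P$. First I would record the local fact that $R_{i}$ and $R_{i+1}$ share \emph{only} the corner $c_{i+1}$ (at $c_{i+1}$ the four regions are $P,R_{i},O_{i+1},R_{i+1}$, so $R_{i}$ and $R_{i+1}$ touch at a point but share no edge), while a non-consecutive pair $R_{i},R_{j}$ shares no corner of $P$ at all. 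Hence a non-consecutive pair can \emph{fail} to be disconnected only if $R_{i}$ and $R_{j}$ meet at some crossing in the interior of $D^{*}$. Applying a checkerboard coloring with $P$ black makes every $R_{i}$ white, which is convenient because two white regions meeting at a crossing must be the diagonal pair there; this restricts how such an interior reconnection can look.

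The mechanism I would use to produce the disconnected pair is a Jordan-curve obstruction. Suppose for contradiction that no two regions around $P$ are disconnected, so in particular $R_{1},\dots,R_{n}$ are pairwise connected. Since $n\ge 4$, the edges $e_{1},e_{2},e_{3},e_{4}$ occur in this cyclic order on $\partial P$, and the non-consecutive pairs $(R_{1},R_{3})$ and $(R_{2},R_{4})$ are then both connected through interior crossings $q_{13},q_{24}$. For each such pair I would route a simple chord of $D^{*}$ from $e_{i}$ to $e_{j}$ that stays in $\mathrm{int}\,R_{i}\cup\{q\}\cup\mathrm{int}\,R_{j}$, crossing from one region to the other only at the shared crossing $q$ and passing through no corner. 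The chord joining $e_{1}$ to $e_{3}$ and the chord joining $e_{2}$ to $e_{4}$ have interleaved endpoints on the circle $\partial P$, so by the Jordan curve theorem they must cross inside $D^{*}$. Because the four regions (when distinct) have disjoint interiors and the relevant pairs $R_{\text{odd}},R_{\text{even}}$ share no edge, this forced intersection can only be a crossing at which an odd-indexed $R$ meets an even-indexed $R$; and since the chords were routed through region interiors away from $c_{1},\dots,c_{4}$, it is a crossing \emph{off} $\partial P$. Thus, say, $R_{1}$ and $R_{2}$ meet at a crossing $q^{*}$ in addition to their corner $c_{2}$.

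From here the plan is to upgrade ``two neighbors meeting at two crossings'' into a violation of irreducibility by passing to an innermost configuration: the two arcs of $\partial R_{1}\cap\partial R_{2}$ running between $c_{2}$ and $q^{*}$ bound a subdisk of $D^{*}$ containing strictly fewer crossings, and choosing the configuration innermost forces that subdisk to be a bigon, whose crossing is then reducible, contradicting the hypothesis. I would also dispose separately of the degenerate situation in which some edge-neighbors coincide, i.e.\ a single white region abuts $P$ across two non-consecutive edges; in that case this region wraps around $P$ and separates two of the remaining neighbors from each other, exhibiting the desired disconnected pair directly.

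The step I expect to be the main obstacle is the third one: controlling exactly \emph{where} the two forced chords meet and turning that crossing into a genuine contradiction with irreducibility. Ruling out that non-adjacent neighbors silently reconnect at a far-away crossing, and making the innermost/bigon reduction work uniformly for all $n\ge 4$ rather than just for $n=4$, is the delicate part; by contrast the checkerboard bookkeeping, the interleaving count, and the coincident-neighbor case should be routine.
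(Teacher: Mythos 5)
Your overall mechanism --- a Jordan-curve separation combined with a checkerboard coloring of the regions around the $n$-gon --- is the same one the paper uses. The paper works with just four edge-adjacent regions $A,B,C,D$: if $A$ and $C$ coincide or are disconnected one is done immediately, and if they are connected through a crossing $p$, a simple arc from $A$ to $C$ through $p$ separates $B$ from $D$, so $B$ and $D$ could only be connected by sharing $p$ itself, which is impossible because all four regions receive the same checkerboard color while regions meeting along an edge at $p$ must alternate. Your two-chord interleaving argument is a symmetric repackaging of this and is fine as far as it goes.

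The genuine gap is your third step. ``Two neighbouring regions meet at two crossings'' is not in tension with irreducibility and cannot be upgraded to a reducible crossing by an innermost argument: any two regions sharing an edge already share the two crossings at its endpoints, and in the standard trefoil projection the outer region and the central trigon share all three crossings, yet that projection is irreducible. Moreover, if $R_1$ and $R_2$ meet $c_2$ and $q^*$ only diagonally, $\partial R_1\cap\partial R_2$ near those points consists of isolated points rather than two arcs, so the subdisk and bigon you want need not exist at all. The repair is already contained in your own setup: when $R_1,\dots,R_4$ are distinct their interiors are pairwise disjoint, and each chord leaves the interiors of its two regions only at its one designated crossing, so the forced intersection of the two chords can only be the point $q_{13}=q_{24}$. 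At that single crossing the two diagonal pairs are $\{R_1,R_3\}$ and $\{R_2,R_4\}$, hence $R_1$ and $R_2$ share an edge there --- contradicting your own first-paragraph observation that all the $R_i$ are white, since edge-adjacent regions must have opposite colors. This finishes the argument with no appeal to irreducibility beyond the initial distinctness of consecutive neighbours, and is exactly the paper's conclusion.
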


\begin{proof}
Take points $a, b, c, d$ on edges of the $n$-gon and call the regions $A, B, C, D$, as shown in Figure \ref{fig-ngon}. 
Since the knot projection is irreducible, the regions $A$ and $B$ are distinct regions. 
Similarly, $B$ and $C$, and $C$ and $D$ are distinct regions. 
The regions $A$ and $D$ are also distinct since $n \geq 4$. 
We will consider two cases, first we consider the case when the regions $A$ and $C$ are the same region in the projection and second they are distinct regions. \\
Case 1: Suppose $A$ and $C$ are the same region in the projection. 
We can draw a simple curve $\alpha$ connecting the points $a$ and $c$ inside the region of $A$ and $C$. 
Since $B$ and $D$ cannot touch $\alpha$, they are disconnected. \\
Case 2: Suppose $A$ and $C$ are distinct regions. 
We consider two subcases, first we consider that $A$ and $C$ are disconnected and second they are connected. 
First suppose $A$ and $C$ are disconnected. 
Then, they are exactly the disconnected regions around the $n$-gon. 
Second, we consider $A$ and $C$ are connected. 
We can draw a simple curve connecting $a$ and $c$ passing through $A$, $C$ and one crossing $p$ shared by $A$ and $C$. 
Then, the regions $B$ and $D$ are distinct regions because they are in the opposite sides of the curve $\alpha$ and $\alpha$ passes through only the two regions $A$ and $C$. 
Moreover, $B$ and $D$ are disconnected. 
Otherwise, $B$ and $D$ must share the crossing $p$ to be connected. 
This contradicts that the link projection admits a checkerboard coloring; 
the regions $A$, $B$, $C$ and $D$ should be given the same color in a checkerboard coloring as they are neighboring the same $n$-gon. 
On the other hand, they are all gathering around the crossing $p$, and this implies the colors of $A$, $C$ and $B$, $D$ should differ. 
\begin{figure}[h]
\centering
\includegraphics[width=25mm]{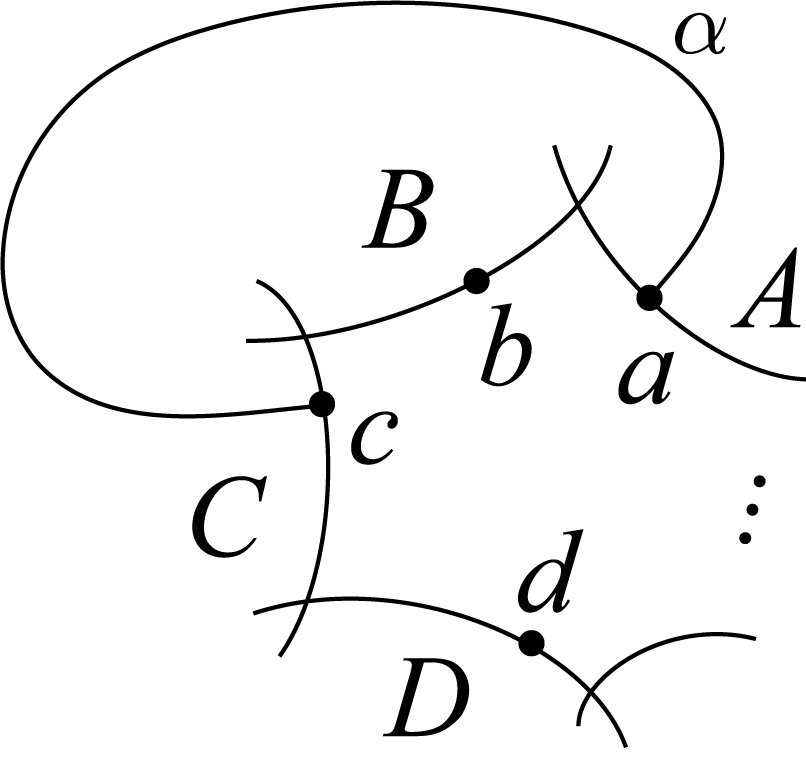}
\caption{Regions $A, B, C, D$ around an $n$-gon.}
\label{fig-ngon}
\end{figure}
\end{proof}

\noindent Also, we have the following corollary from Lemma \ref{lem-circ4}. 

\begin{cor}
Any connected irreducible link projection $L$ with $13$ or more crossings has $I(L) \geq 3$. 
\end{cor}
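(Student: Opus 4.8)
The plan is to find three pairwise disconnected regions by dividing $S^2$ into three tangles with two disjoint circles and locating a closed region in each. Recall that a closed region of a tangle has all of its boundary crossings strictly on its own side of the dividing circle, so closed regions lying in different tangle pieces can never share a crossing and are therefore disconnected.

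First I would invoke Lemma \ref{lem-circ4} to obtain a circle that cuts $L$ into two tangles, each carrying a closed bigon or trigon; call these two small faces $R_1$ and $R_2$. Since they sit on opposite sides of the circle they are already disconnected, and each is bounded by only two or three crossings. Next I would draw a tight circle $C_1$ around $R_1$ and a tight circle $C_2$ around $R_2$. Because $R_1$ and $R_2$ lie on opposite sides of the circle from Lemma \ref{lem-circ4}, the curves $C_1$ and $C_2$ can be taken to be disjoint, bounding disjoint disks $D_1 \ni R_1$ and $D_2 \ni R_2$. Each $D_i$ contains only the $a_i \in \{2,3\}$ crossings of $R_i$ and meets the link transversely in exactly $2a_i \le 6$ points. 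These two circles cut $S^2$ into the disks $D_1$, $D_2$ and a middle annular tangle $M = S^2 \setminus (D_1^{\circ} \cup D_2^{\circ})$.

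The third region will be any closed region $R_3$ of $M$, that is, any region of $M$ touching neither $C_1$ nor $C_2$. Granting its existence, the boundary crossings of $R_1$, $R_2$, $R_3$ lie respectively in the pairwise disjoint crossing sets of $D_1$, $D_2$ and $M$, so $R_1$, $R_2$, $R_3$ are pairwise disconnected and $\{R_1,R_2,R_3\}$ is isolated, giving $I(L) \ge 3$. It remains to produce $R_3$, and this is the heart of the argument. I would run an Euler-characteristic count on the annulus $M$: writing $m = n - a_1 - a_2$ for the number of crossings inside $M$ and $p = 2a_1$, $q = 2a_2$ for the numbers of intersection points of the link with $C_1$, $C_2$, the $4$-valent vertices (crossings) together with the degree-$3$ boundary vertices give, via $\chi(M)=0$, exactly $m + \tfrac{p+q}{2}$ faces of $M$. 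Each of the $p$ arcs into which $C_1$ is cut borders a single face of $M$, and likewise for $C_2$, so at most $p+q$ faces meet the boundary. Hence the number of interior (closed) faces is at least $m + \tfrac{p+q}{2} - (p+q) = n - 2(a_1+a_2) \ge n - 12$, which is positive precisely when $n \ge 13$.

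The main obstacle is this final counting step on $M$: one must pin down how $C_1$ and $C_2$ meet the link (so that $p,q \le 6$), confirm that the tight circles around the bigon or trigon enclose only its own two or three crossings, and carry out the face bookkeeping on the annulus carefully enough that the bound $n - 2(a_1+a_2)$ emerges cleanly. Everything else---the disjointness of the circles and the automatic disconnectedness of closed regions in different pieces---is routine, and the threshold $13$ is exactly the point at which the guaranteed interior region of $M$ appears.
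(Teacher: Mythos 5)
Your proof is correct, and it opens the same way as the paper's: both invoke Lemma \ref{lem-circ4} to produce two disconnected closed bigons or trigons $r_1, r_2$, and then argue that a third region avoiding both must exist. Where you diverge is in how that third region is found. The paper simply observes that a $k$-gon has at most $2k$ regions sharing a crossing with it, so at most $6+6=12$ regions are connected to $r_1$ or $r_2$; since $L$ has $n+2\geq 15$ regions, at least one region is neither $r_1$ nor $r_2$ nor connected to either. Your route---excising disjoint disks around $r_1$ and $r_2$ and running an Euler-characteristic count on the complementary annulus $M$ to certify a closed face there---arrives at the same bound $n-2(a_1+a_2)\geq n-12$, and the points you flag as needing care do check out: the tight circles meet $L$ in $2a_i\leq 6$ points and enclose only the $a_i$ crossings of the small face; the graph in $M$ (tangle plus both boundary circles) is connected because $L$ is connected and meets both disks, so the embedding is cellular and $V-E+F=0$ holds exactly (and even without cellularity the inequality errs in your favor); and each boundary arc borders exactly one face of $M$, giving at most $p+q$ boundary faces. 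The trade-off is that your argument requires the annulus bookkeeping where the paper's needs only the elementary observation about the neighbours of a bigon or trigon; in exchange, your count localizes the third region to the complement of the two disks, which is marginally more information than the paper extracts.
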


\begin{proof}
By Lemma \ref{lem-circ4}, $L$ has a pair of disconnected regions, each of which is either a bigon or a trigon, say $r_1$ and $r_2$. 
Then $L$ has at most $12$ regions which are connected to either $r_1$ or $r_2$. 
Since $L$ has $15$ or more regions, $L$ has at least one region, say $r_3$, which is neither $r_1$ nor $r_2$ and is connected to neither $r_1$ nor $r_2$.
\end{proof}

\section{Warping degree, welded unknotting number and isolate-region number of a knot diagram}
\label{section-wd}

In this section, we discuss the warping degree and welded unknotting number of a knot diagram and their relations to the isolate-region number. 
In Subsection \ref{subsection-wd}, we discuss the relation between the warping degrees of a knot diagram and knot projection. 
In Subsection \ref{subsection-welded}, we discuss the relation between the welded unknotting number, warping degree and isolate-region number of a knot diagram.

\subsection{Warping degree}
\label{subsection-wd}

Let $D$ be an oriented knot diagram with a base point $b$ on an edge. 
A crossing point of $D$ is said to be a {\it warping crossing point of $D_b$} if we encounter the crossing as an under crossing first when we travel $D$ from $b$ with the orientation. 
The {\it warping degree of $D_b$}, $d(D_b)$, is the number of the warping crossing points of $D_b$. 
We can represent all the warping degrees by the ``warping degree labeling'' as shown in Figure \ref{fig-wdl}.

\begin{figure}[h]
\centering
\includegraphics[width=75mm]{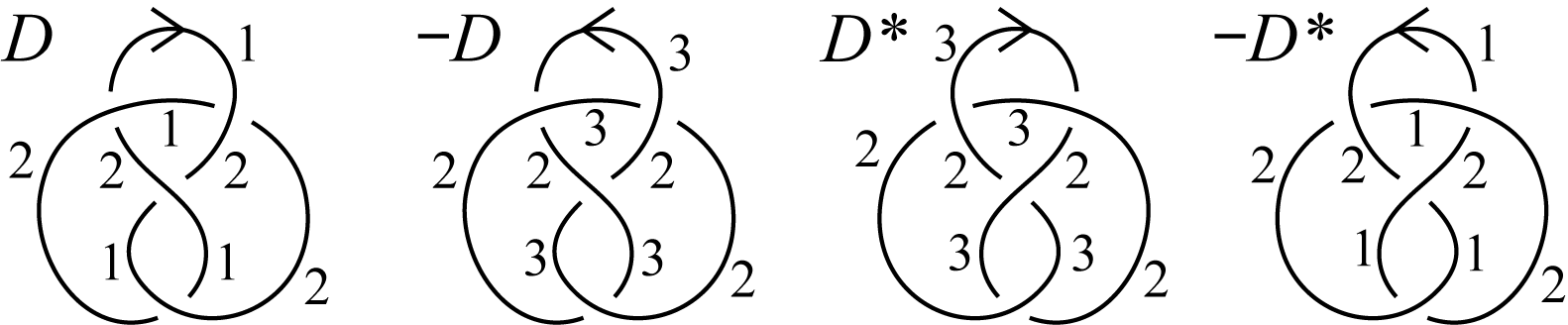}
\caption{Warping degree labeling. Each label on an edge indicates the warping degree where a base point is on the edge.}
\label{fig-wdl}
\end{figure}

\noindent The {\it warping degree of $D$}, $d(D)$, is the minimal value of $d(D_b)$ for all base points $b$ of $D$ (\cite{lecture}). 
As shown in Figure \ref{fig-wdl}, warping degree depends on the orientation; 
we have $d(D)=1$, whereas $d(-D)=2$ in the figure, where $-D$ is $D$ with orientation reversed. 
In this paper, we define the {\it unoriented warping degree}, denoted by $\overline{d}(D)$, to be $\overline{d}(D)= \min \{ d(D), d(-D) \}$. 
For a knot diagram $D$, we denote by $|D|$ the knot projection which is obtained from $D$ by forgetting the crossing information. 
The {\it warping degree of a projection $P$}, $d(P)$, is the minimal value of $d(D)$ for all oriented alternating diagrams $D$ with $|D|=P$. 
Although each projection has four oriented alternating diagrams (see Figure \ref{fig-wdl}), the following proposition claims it is sufficient to check only one alternating diagram with two orientations.

\begin{prop}
The equality $\overline{d}(D)=d(P)$ holds when $D$ is an alternating diagram with $|D|=P$. 
\end{prop}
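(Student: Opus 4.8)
The plan is to work directly from the definition $d(P)=\min\{\,d(D') : D' \text{ an oriented alternating diagram with } |D'|=P\,\}$. Since $P=|D|$ is the image of a single circle it is connected, so once the over/under datum is fixed at one crossing the alternating condition propagates it to every crossing; hence $P$ carries exactly two alternating diagrams, namely $D$ and the diagram $D^{*}$ obtained from $D$ by switching every crossing (switching all crossings preserves both the projection $P$ and the alternating property). Orienting each in both ways gives the four oriented alternating diagrams $D,\,-D,\,D^{*},\,-D^{*}$, so by definition $d(P)=\min\{d(D),d(-D),d(D^{*}),d(-D^{*})\}$. The entire task is then to show that this four-term minimum collapses to $\min\{d(D),d(-D)\}=\overline{d}(D)$.

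The engine is a crossing-by-crossing observation at a \emph{fixed} base point $b$, and this is the step I expect to be the main obstacle, since it requires arguing simultaneously about two pieces of data at each crossing. Traversing $D$ from $b$ meets each crossing exactly twice, once on its over-strand and once on its under-strand, and the crossing is warping for $D_{b}$ precisely when the under-strand passage is the \emph{earlier} of the two. First I would check that reversing the orientation reverses the temporal order of the two passages while leaving fixed which passage lies on the under-strand; this flips the warping status at every crossing, giving $d(-D_{b})=n-d(D_{b})$ with $n=c(P)$. Next I would check that switching every crossing (passing from $D$ to $D^{*}$) keeps the temporal order but exchanges over- and under-strands, which also flips the warping status, giving $d(D^{*}_{b})=n-d(D_{b})$. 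The delicate point is precisely that each of the two operations alters exactly one of the two data and fixes the other, so that both flips are genuine and not a mere coincidence of final counts.

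Combining the two identities yields $d(D^{*}_{b})=d(-D_{b})$ for every base point $b$; taking the minimum over $b$ gives $d(D^{*})=d(-D)$. Replacing $D$ by $-D$ in the crossing-switching identity, and using that switching all crossings commutes with reversing orientation so that $(-D)^{*}=-D^{*}$, I obtain $d(-D^{*}_{b})=n-d(-D_{b})=d(D_{b})$ for every $b$, hence $d(-D^{*})=d(D)$. Substituting these two equalities into the definition collapses the four-term minimum, $d(P)=\min\{d(D),d(-D),d(-D),d(D)\}=\min\{d(D),d(-D)\}=\overline{d}(D)$, which is the assertion. Everything after the two flip identities is pure bookkeeping, so the proof rests essentially on that single local computation.
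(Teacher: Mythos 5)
Your proof is correct and follows essentially the same route as the paper's: both rest on the two flip identities $d(-D_b)=c(D)-d(D_b)$ and $d(D^*_b)=c(D)-d(D_b)$, deduce $d(D^*)=d(-D)$ and $d(-D^*)=d(D)$, and collapse the four-term minimum $d(P)=\min\{d(D),d(-D),d(D^*),d(-D^*)\}$ to $\overline{d}(D)$. The only difference is that you prove the flip identities and the fact that $P$ carries exactly four oriented alternating diagrams from first principles, whereas the paper cites prior work and a figure for these facts.
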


\begin{proof}
As shown in \cite{warping-3}, we have $d(-D_b)=c(D)-d(D_b)$ for any oriented knot diagram $D$ with a base point $b$ because the order of over crossing and under crossing is switched at every crossing. 
We also have $d({D_b}^*)= c(D)-d(D_b)$ for the mirror image $D^*$ because the crossing information is changed at each crossing. 
Therefore, we have $d({D_b}^*)=d(-D_b)$ and $d(D^*)=d(-D)$. 
Hence $d(P) = \min \{ d(D), \ d(-D), \ d(D^*), \ d(-D^*)  \} = \min \{ d(D), \ d(-D) \} = \overline{d}(D)$ for any alternating diagram $D$ with $|D|=P$. 
\end{proof}

\noindent A nonalternating knot diagram $D$ is said to be {\it almost alternating} if $D$ becomes alternating by a single crossing change. 
For almost alternating diagrams, we have the following proposition. 

\begin{prop}
Let $D$ be an almost alternating diagram with $|D|=P$. 
Then $\overline{d}(D) \leq d(P)$ holds. 
In particular, if an almost alternating diagram $D$ has no monogons, $\overline{d}(D)=d(P)-1$.
\label{lem-almalt}
\end{prop}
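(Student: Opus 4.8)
The plan is to compare $D$ with the alternating diagram $D'$ obtained by switching the (chosen) dealternating crossing $p$ of $D$, so that $|D'|=|D|=P$, $c(D')=c(D)$, and, by the preceding proposition, $\overline{d}(D')=d(P)$. Everything will then be read off from the warping degree labeling, using the elementary rule for how the labels change: moving the base point forward across one crossing point changes the warping degree by $+1$ if we pass over there and by $-1$ if we pass under, because this move sends one of that crossing's two passes to the end of the traversal and thereby flips the warping status of exactly that crossing and of no other. First I would record the consequence for $D'$: traveling an alternating diagram the passes read over, under, over, under, $\dots$, so the labels of $D'$ oscillate between two consecutive integers $m$ and $m+1$ with $m=d(D')$; hence $d(P)=\overline{d}(D')=\min\{m,\,c(D)-m-1\}$.

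Next I would compare the two labelings edge by edge. They obey the same increment rule except at the two passes of $p$, where over and under are interchanged, so the increments agree off these passes and are opposite there. The key bookkeeping point is that this forces the difference $\delta_i:=\ell_i^{D}-\ell_i^{D'}$ to be constant on each of the two arcs into which the two passes of $p$ cut the knot, equal to $+1$ on one and $-1$ on the other; in particular $\ell_i^{D}\in\{m-1,\,m,\,m+1,\,m+2\}$ throughout. I must be careful here not to fix the value at some base edge and forget that it, too, shifts: the net shift on each arc is $\pm1$, not $\pm2$, which is exactly the step most prone to sign errors.

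I would then evaluate $\overline{d}(D)=\min_i\min\{\ell_i^{D},\,c(D)-\ell_i^{D}\}$ from this value distribution. The lengths of the two arcs cut off by $p$ are decisive: an arc of length $\ge2$ contains consecutive edges of both parities, hence both an $m$- and an $(m+1)$-labeled edge of $D'$. When both arcs have length $\ge2$, the down-shifted arc attains minimum label $m-1$ and the up-shifted arc attains maximum label $m+2$, so $d(D)=m-1$ and $d(-D)=c(D)-m-2$, whence $\overline{d}(D)=\min\{m-1,\,c(D)-m-2\}=d(P)-1$. When one arc has length $1$, I would check the two possibilities for the parity of its single edge directly; each yields $\overline{d}(D)\le d(P)$, giving the general inequality.

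Finally I would identify the length condition with monogons: the two passes of $p$ bound a single edge (a length-$1$ arc) precisely when that arc is a loop bounding a monogon at $p$, so $D$ having no monogons forces both arcs to have length $\ge2$ and hence $\overline{d}(D)=d(P)-1$, while the length-$1$ analysis covers the general bound $\overline{d}(D)\le d(P)$. I expect the main obstacle to be establishing the $\pm1$ arc structure of $\delta_i$ rigorously and handling the orientation symmetry (which arc is up- or down-shifted is settled by passing to $-D$), together with the monogon identification; the remaining arithmetic is routine.
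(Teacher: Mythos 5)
Your proposal is correct and follows essentially the same route as the paper: switch the dealternating crossing to obtain an alternating diagram $D^A$ with $\overline{d}(D^A)=d(P)$, observe that the warping degree labels shift by $-1$ on one of the two arcs cut off by that crossing and by $+1$ on the other, and then case-split according to whether each arc contains a crossing, identifying the length-one arc with a monogon. The paper pins down the $\pm 1$ shift directly (the switched crossing is the unique crossing whose warping status changes for base points on each arc), which is the clean way to supply the anchor value your increment-plus-periodicity argument needs.
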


\noindent To prove Proposition \ref{lem-almalt}, we prepare a further formula. 
Since we have $d(-D)= \min_b d(-D_b) = c(D) - \max_b d(D_b)$ for any diagram $D$, we have
\begin{align*}
\overline{d}(D)= \min \{ d(D), \ d(-D) \} = \min \{ \min_b d(D_b), \ c(D)- \max_b d(D_b) \} 
\end{align*}
for any oriented knot diagram $D$. 
Hence, we can find the value of $d(P)$ only from one oriented alternating diagram $D$ with $|D|=P$ by looking at the minimum and maximum values of warping degree labeling. 
We prove Proposition \ref{lem-almalt}. \\

\noindent {\it Proof of Proposition \ref{lem-almalt}.} 
Let $D$ be an almost alternating knot diagram which is obtained from an oriented alternating diagram $D^A$ by a single crossing change at a crossing $c$. 
Let $C_1$ (resp. $C_2$) be a subdiagram of $D^A$ from a point just after the over crossing (resp. the under crossing) of $c$ to a point just before the under crossings (resp. the over crossing) of $c$, as shown in Figure \ref{fig-da-d}. 
\begin{figure}[h]
\centering
\includegraphics[width=45mm]{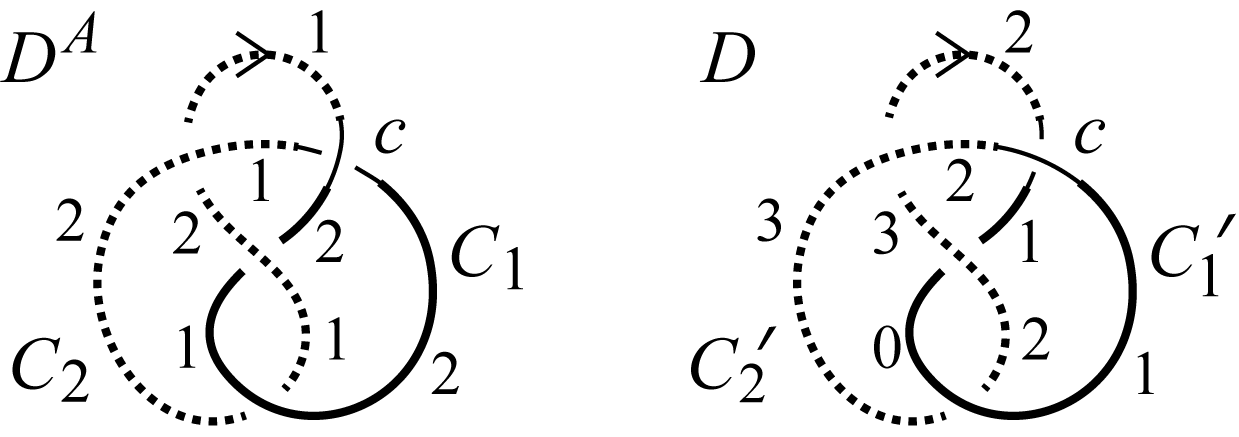}
\caption{An alternating diagram $D^A$ and an almost alternating diagram $D$ which are related by a crossing change at the crossing $c$. The subdiagrams $C_1$ and ${C_1}'$ are thickened and $C_2$ and ${C_2}'$ are dotted.}
\label{fig-da-d}
\end{figure}
Let ${C_1}'$ (resp. ${C_2}'$) be the corresponding subdiagrams of $D$ to $C_1$ (resp. $C_2$). 
Each warping degree labeling on ${C_1}'$ is smaller than the corresponding labeling on $C_1$ by one because $c$ becomes a non-warping crossing point for base points on ${C_1}'$ by the crossing change at $c$ as mentioned in \cite{quan, warping-p}. 
In the same way, each labeling on ${C_2}'$ is greater than $C_2$ by one.
Let $d(D^A)=i$. 
Then we have $d(-D^A)=c(D)-i-1$, and $\overline{d}(D^A)= \min \{ i, \ c(D)-i-1 \}$. 
We will proceed by analyzing cases according to whether $C_1$ and $C_2$ have crossings. 
In each case, the relation between the unoriented warping degrees of $D$ and $D^A$ will be discussed by considering the warping degree labeling to determine the relation between $\overline{d}(D)$ and $d(P)$ ($=\overline{d}(D^A)$), where $P$ is the knot projection with $|D|=|D^A|=P$. \\
Case 1: Suppose both $C_1$ and $C_2$ have crossings. 
Then their warping degree labeling are consisting of $i$ and $i+1$. 
After the crossing change at the crossing $c$ of $D^A$, the labels on ${C_1}'$ are $i-1, \ i$ and the labels on ${C_2}'$ are $i+1, \ i+2$. 
Hence $d(D)=i-1 =d(D^A)-1$ and $d(-D)= c(D)-i-2 = d(-D^A)-1$, and therefore $\overline{d}(D)= \overline{d}(D^A)-1$. \\
Case 2: Suppose $C_1$ has no crossings and $C_2$ has crossings. 
The label of $C_1$ is only $i+1$ and the labels of $C_2$ are $i+1, \ i+2$, and the labels of ${C_1}'$ is $i$ and that of ${C_2}'$ are $i+1, \ i+2$. 
Hence $d(D)=i, \ d(-D)=c(D)-i-2$, and $\overline{d}(D) \leq \overline{d}(D^A)$. \\
Case 3: Suppose $C_1$ has crossings and $C_2$ has no crossings. 
The labels of $C_1$ are $i-1, \ i$ and that of $C_2$ is only $i$, and the labeling of ${C_1}'$ are $i-1, \ i$ and that of ${C_2}'$ is $i+1$. 
Hence $d(D)=i-1, \ d(-D)= c(D) -i-1$, and $\overline{d}(D) \leq \overline{d}(D^A)$. \\
Case 4: Suppose neither $C_1$ nor $C_2$ has crossings. 
Then $c(D)=1$, and $D$ is not an almost alternating diagram. \\
Therefore, we have $\overline{d}(D) \leq \overline{d}(D^A)=d(P)$, where $P=|D|$. 
In particular, if $D$ has no monogons, then both $C_1$ and $C_2$ must have crossings for any crossing $c$, and this case applies Case 1. 
\hfill$\square$ \\

\noindent Next, we show the following proposition. 

\begin{prop}
Let $P$ be an irreducible knot projection with $c(P) \leq  9$. 
The inequality $\overline{d}(D) \leq d(P)$ holds for any knot diagram $D$ with $|D|=P$ if and only if $P$ is not one of the four projections illustrated in Figure \ref{fig-4proj}.
\label{lem-wd9}
\end{prop}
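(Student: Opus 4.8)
The plan is to reduce the statement to a combinatorial fact about the \emph{warping degree labeling} and then to carry out a finite verification. First I would record precisely how the labels $d(D_b)$ vary as the base point $b$ travels once around the diagram: when $b$ passes through a crossing, the label changes by $+1$ if the strand being traversed there lies over and by $-1$ if it lies under. This is the same local computation already used in the proof of Proposition~\ref{lem-almalt}, read as a base-point move rather than a crossing change. Hence the sequence of labels is a closed walk of length $2c(D)$ with $c(D)$ up-steps and $c(D)$ down-steps, and writing $m=\min_b d(D_b)$ and $M=\max_b d(D_b)$ we have $d(D)=m$, $d(-D)=c(D)-M$, so that
\begin{align*}
\overline{d}(D)=\min\{m,\ c(D)-M\}\le \frac{m+(c(D)-M)}{2}=\frac{c(D)-R}{2},
\end{align*}
where $R=M-m$ is the range of the walk.

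The key observation is that $R=1$ occurs precisely when the up- and down-steps alternate, which happens exactly when $D$ is alternating; for alternating $D$ this recovers $\overline{d}(D)=\min\{i,\,c(D)-i-1\}=d(P)$ with $i=d(D^A)$, in agreement with the earlier proposition. Consequently every \emph{non-alternating} $D$ with $|D|=P$ has $R\ge 2$ and therefore $\overline{d}(D)\le \lfloor (c(P)-2)/2\rfloor$. Combining this with the fact that the alternating diagram realizes $d(P)$, I obtain $\overline{d}(D)\le d(P)$ for \emph{all} $D$ as soon as $d(P)\ge \lfloor (c(P)-2)/2\rfloor$. This disposes of every projection whose alternating diagram is ``balanced'' and confines all possible exceptions to irreducible projections with $6\le c(P)\le 9$ and small warping degree, namely $d(P)\le 1$ when $c(P)\in\{6,7\}$ and $d(P)\le 2$ when $c(P)\in\{8,9\}$.

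For these finitely many candidate projections I would reinterpret the choice of crossing information as a choice, for each crossing (a chord of the Gauss diagram), of which of its two passages carries the up-step; the chord structure of the projection constrains these choices globally. For each candidate one then maximizes $\min\{m,\,c(P)-M\}$ over all admissible sign assignments and compares the result with $d(P)$. Carrying this out on the tabulated list of irreducible knot projections with at most nine crossings isolates exactly the four projections of Figure~\ref{fig-4proj} as the ones for which some non-alternating assignment yields $\overline{d}(D)>d(P)$. The ``only if'' direction is then completed by exhibiting, for each of these four projections, an explicit diagram together with its warping degree labeling witnessing $\overline{d}(D)>d(P)$; the ``if'' direction follows from the bound above together with the finite check showing that no other candidate projection admits such a diagram.

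The main obstacle I anticipate is this last, finite verification. Although the reduction confines the search to near-ascending alternating projections, the quantity $\min\{m,\,c(P)-M\}$ is not monotone under individual crossing changes, and the admissible sign patterns are entangled by the global chord (Gauss-code) constraints of each projection, so there is no evident short-cut to replace a careful case-by-case (likely computer-assisted) enumeration. Proving sharply that precisely four projections, and no others, beat their alternating value is where the real work lies.
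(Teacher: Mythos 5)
Your reduction is sound and is in substance the same one the paper uses: your bound $\overline{d}(D)\le (c(D)-R)/2$ with $R=s(D)$, together with the fact that $s(D)=1$ exactly for alternating diagrams (the paper's Lemma \ref{lem-spn}), is precisely how the paper's proof handles the cases $c(P)=3,\dots,9$ with $d(P)$ not too small, via Lemma \ref{lem-dc} and the observation that a non-alternating counterexample would force all labels into a window of width one. Your arithmetic confining the possible exceptions to irreducible projections with $6\le c(P)\le 9$ and $d(P)\le 1$ (for $c=6,7$) or $d(P)\le 2$ (for $c=8,9$) is also correct.

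The genuine gap is in how you close the remaining cases. You propose an unexecuted, ``likely computer-assisted'' enumeration over all tabulated irreducible projections with at most nine crossings, maximizing $\min\{m,\,c-M\}$ over all crossing assignments subject to Gauss-code constraints; as written, nothing is actually verified, so neither direction of the equivalence is established for the residual candidates. The missing ingredient is the classification, already in the literature (\cite{warping-2}, quoted as Lemma \ref{lem-d2}), of all irreducible knot projections with $d(P)=1$ (two projections, with $3$ and $4$ crossings) and $d(P)=2$ (sixteen projections). This collapses your candidate list instantly: there are no irreducible projections with $6\le c\le 7$ and $d(P)\le 1$, none with $c=9$ and $d(P)\le 2$, and exactly the four $8$-crossing projections of Figure \ref{fig-4proj} have $d(P)=2$. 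What then remains is only the ``only if'' direction for those four, which the paper settles by exhibiting for each an explicit diagram with $\overline{d}(D)=3>2=d(P)$ (Figure \ref{fig-4diag}) --- a step you correctly identify as necessary but do not carry out. So the place you flag as ``where the real work lies'' is exactly where your argument stops short; invoking the warping-degree classification (or proving it) is what is needed to finish.
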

\vspace{3mm}

\begin{figure}[h]
\centering
\includegraphics[width=65mm]{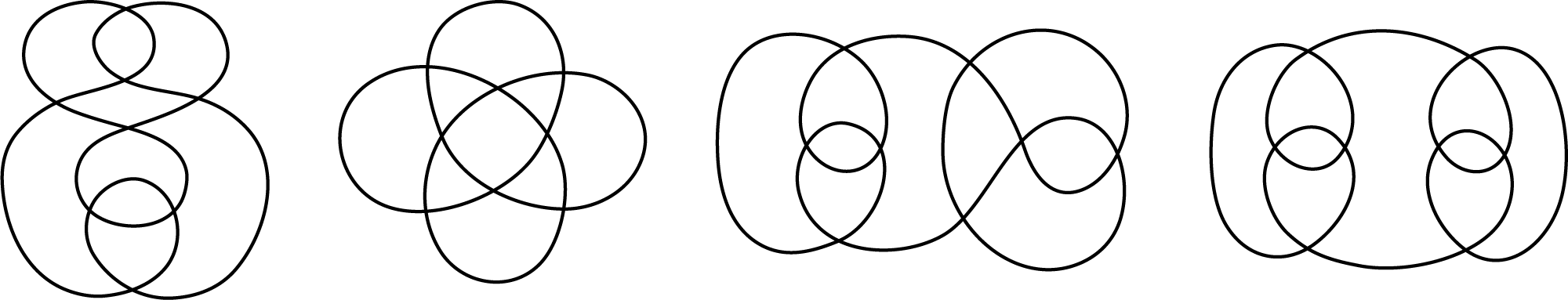}
\caption{The four exceptional projections.}
\label{fig-4proj}
\end{figure}

\noindent To prove Proposition \ref{lem-wd9}, we prepare Lemmas \ref{lem-d2}, \ref{lem-dc}, \ref{lem-spn}. 
All the irreducible knot projections with warping degree 1 and 2 are determined in \cite{warping-2}. 

\begin{lem}{\cite{warping-2}}
Let $P$ be an irreducible knot projection. \\
(1) $d(P)=1$ if and only if $P$ is one of the two projections depicted in Figure \ref{fig-d1}. \\
(2) $d(P)=2$ if and only if $P$ is one of the 16 projections depicted in Figure \ref{fig-d2}.
\label{lem-d2}
\end{lem}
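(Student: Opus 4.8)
The plan is to derive both parts from a single structural description of the warping-degree labeling of a reduced alternating diagram. Fix a reduced alternating diagram $D$ with $|D|=P$ irreducible and $c=c(P)\ge 1$ crossings, orient it, and traverse the knot recording the $2c$ edges in cyclic order together with their labels $d(D_b)$. When the base edge is pushed past one crossing-passage, exactly one crossing changes its warping status, so the label changes by $\pm 1$, the sign being determined by whether that passage is an over- or an under-passage (this is the local content behind the identity $d(-D_b)=c(D)-d(D_b)$ established above). Since $D$ is alternating the over/under passages strictly alternate along the whole traversal, so these increments alternate $+1,-1,+1,-1,\dots$; hence the labels oscillate between two consecutive integers $m$ and $m+1$, each occurring on exactly $c$ edges, and $m=d(D)$. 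In particular $\overline d(D)=\min\{m,\,c-m-1\}$, recovering the formula used earlier, and since a reduced alternating diagram with $c\ge 1$ crossings is never descending for either orientation, we get $1\le m\le c-2$.

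The enumeration is then controlled by the forward distance $\ell_i$ from the over-passage to the under-passage of each crossing $i$. Counting warping crossings by base edge in two ways gives $\sum_b d(D_b)=\sum_i\ell_i$, while summing the oscillating labels gives $\sum_b d(D_b)=c(2m+1)$; hence $\sum_i\ell_i=c(2m+1)$. Because $P$ is irreducible it has no monogon, so the two passages of a crossing are never adjacent in the traversal, and since $\ell_i$ is odd this forces $\ell_i\ge 3$ for every $i$.

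Part (1) now follows cleanly. If $d(P)=1$ then, as $m\ge 1$ for every orientation, some orientation realizes $m=d(D)=1$, whence $\sum_i\ell_i=3c$; together with $\ell_i\ge 3$ this forces $\ell_i=3$ for every crossing. Putting the over-passages at the even positions $0,2,\dots,2c-2$ of the traversal, the condition $\ell_i=3$ pairs position $2k$ with position $2k+3\pmod{2c}$, which determines the Gauss code of $D$ uniquely for each $c$. It remains to decide for which $c$ this forced code is realizable by a plane curve: for $c=3$ it is the standard trefoil projection and for $c=4$ the figure-eight projection, and I would prove it is non-realizable for every $c\ge 5$. This non-realizability is the main obstacle of part (1); I expect to settle it either by applying the classical Gauss/Dehn--Rosenstiehl realizability condition to the forced code — whose interlacement graph is the $c$-cycle — or by a direct inductive argument showing the nested $\ell_i=3$ pattern cannot close up in the plane once $c\ge 5$. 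Combined with the trivial $c=0,1,2$ cases, this yields exactly the two projections of Figure \ref{fig-d1}.

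For part (2) the same machinery gives, after choosing the orientation with $m\le c-m-1$, that $d(P)=2$ forces $m=2$ (so $c\ge 5$) and hence $\sum_i\ell_i=5c$; writing $\ell_i=3+2t_i$ with $t_i\ge 0$ this reads $\sum_i t_i=c$. Unlike the rigid $d(P)=1$ case this admits several crossing distributions, so the argument becomes a finite classification: first bound $c$ by showing that realizability together with $\sum_i t_i=c$ and $\ell_i\ge 3$ is incompatible with $c$ large, and then, for each surviving value of $c$, enumerate the admissible Gauss codes, discard the non-realizable ones, and identify projections up to mirror image and orientation reversal. Carrying out this enumeration and proving its completeness — that exactly the $16$ projections of Figure \ref{fig-d2} remain — is the main obstacle of part (2); the $(2,5)$-torus projection $5_1$, which has $m=2$, is one representative, and the recursive structure of the $\ell_i=3+2t_i$ codes is what I would exploit to keep the case analysis finite.
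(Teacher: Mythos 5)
This lemma is imported from \cite{warping-2}; the paper gives no proof of it, so there is nothing in-text to compare your argument against --- you are attempting an independent proof of the main classification theorem of that reference. Your combinatorial setup is sound and genuinely attractive: for an irreducible alternating diagram the labels do oscillate between $m$ and $m+1$ with each value on exactly $c$ edges, the double count $\sum_b d(D_b)=\sum_i\ell_i=c(2m+1)$ is correct, and $\ell_i\ge 3$ follows from irreducibility (no monogon) together with the fact that $\ell_i$ is odd --- though note that oddness is not free: it is Gauss's parity condition for realizable closed curves, and your claim that a reduced alternating diagram is never descending also deserves its one-line proof (the crossing whose over-passage comes last from the base point would have its under-passage immediately after it, forming a monogon). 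Granting these, part (1) is cleanly reduced to: the chord diagram pairing $2k$ with $2k+3 \pmod{2c}$ is realizable in the plane only for $c=3,4$.

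The genuine gaps are exactly the two steps you flag as ``the main obstacle'' and defer. For part (1), the non-realizability of the forced length-$3$ code for $c\ge 5$ is the entire content of the classification and is not proven; note that the Gauss parity test does not settle it (the interlacement graph is the $c$-cycle, all of whose degrees are even), so you genuinely need the finer Dehn--Lov\'asz--Rosenstiehl criterion or an explicit planarity argument, and until that is written the statement ``exactly two projections'' is unestablished. For part (2) the situation is worse: you have no bound on $c$ at all. The Diophantine constraints $\sum_i t_i=c$, $t_i\ge 0$ admit solutions for every $c\ge 5$, so the finiteness of your case analysis rests entirely on realizability, which is precisely what has not been argued; without an a priori bound such as $c\le 8$ (which is implicitly used later in the paper's Proposition~\ref{lem-wd9}, Case 5, where $c=9$ is asserted to force $d(P)\ge 3$), the enumeration cannot even begin, let alone be shown to terminate in the $16$ projections of Figure~\ref{fig-d2}. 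As it stands the proposal is a plausible strategy with a correct reduction, not a proof of either part.
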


\begin{figure}[h]
\centering
\includegraphics[width=20mm]{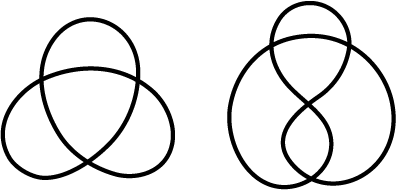}
\caption{All the irreducible knot projections of warping degree one.}
\label{fig-d1}
\end{figure}

\begin{figure}[h]
\centering
\includegraphics[width=100mm]{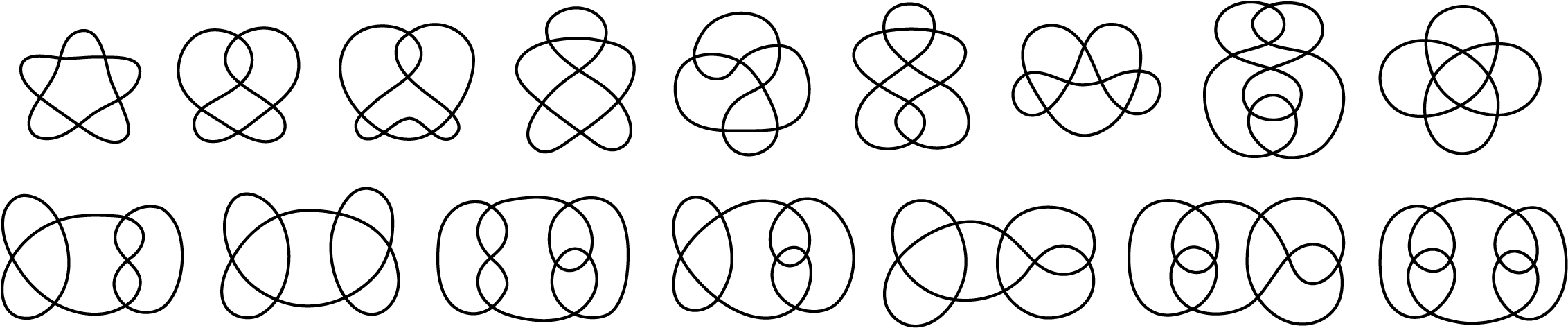}
\caption{All the irreducible knot projections of warping degree two.}
\label{fig-d2}
\end{figure}

\noindent The following lemma gives an upper bounds for $\overline{d}(D)$ and $d(P)$. 

\begin{lem}
The inequality $\overline{d}(D) \leq (c(D)-1)/2$ holds for any knot diagram $D$ with $c(D) \geq 1$. 
The inequality $d(P) \leq (c(P)-1)/2$ holds for any knot projection $P$ with $c(P) \geq 1$.
\label{lem-dc}
\end{lem}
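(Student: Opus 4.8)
The plan is to reduce both inequalities to one observation: the warping degree labeling of a diagram with at least one crossing is never constant. Recall the formula recorded just above the statement,
$$\overline{d}(D)=\min\{\min_b d(D_b),\ c(D)-\max_b d(D_b)\},$$
and abbreviate $m=\min_b d(D_b)$ and $M=\max_b d(D_b)$. I would first establish that $M\ge m+1$ whenever $c(D)\ge 1$, and then deduce both parts of the lemma from this by elementary bookkeeping.

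To prove $M\ge m+1$, fix any crossing $c$ of $D$ and compare the warping degrees of two base points $b$ and $b'$ placed on the under-arc of $c$, immediately before and immediately after $c$ along the orientation. With the base point at $b$ the first thing encountered is the under-passage at $c$, so $c$ is a warping crossing point of $D_b$; with the base point at $b'$ the under-passage at $c$ becomes the very last passage encountered, while the over-passage at $c$ is now met first, so $c$ is \emph{not} a warping crossing point of $D_{b'}$. For every other crossing $c'$, sliding the base point across $c$ alone does not move it past either passage of $c'$, so the order of the two passages of $c'$ relative to the base point is unchanged and its warping status is the same for $D_b$ and $D_{b'}$. Hence exactly the status of $c$ toggles, giving $d(D_b)=d(D_{b'})+1$ and therefore $M\ge d(D_b)=d(D_{b'})+1\ge m+1$. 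Verifying rigorously that no other crossing changes status is the one delicate point of the argument; it is essentially the known fact (used already in the proof of Proposition \ref{lem-almalt} and referenced there) that passing a base point across a crossing changes the warping degree by exactly $\pm 1$. Everything after this is routine.

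With $M\ge m+1$ in hand, the first inequality follows immediately, since $\min\{x,y\}\le (x+y)/2$ gives
$$2\,\overline{d}(D)\le m+\bigl(c(D)-M\bigr)=c(D)-(M-m)\le c(D)-1,$$
so $\overline{d}(D)\le (c(D)-1)/2$. For the second inequality I would pass to an alternating diagram: given a projection $P$ with $c(P)\ge 1$, choose any alternating diagram $D$ with $|D|=P$, which always exists. The proposition of this section stating that $\overline{d}(D)=d(P)$ for alternating $D$ then applies, and since $c(D)=c(P)$ the bound just proved yields $d(P)=\overline{d}(D)\le (c(D)-1)/2=(c(P)-1)/2$, as required.
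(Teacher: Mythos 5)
Your proof is correct, but it takes a more self-contained route than the paper. The paper disposes of the first inequality in one line by citing the known bound $d(D)+d(-D)\leq c(D)-1$ from \cite{warping-3} and observing that $2\,\overline{d}(D)\leq d(D)+d(-D)$; the second inequality then follows, as in your argument, by specializing to an alternating diagram. What you have done is essentially reprove that cited inequality from scratch: since $d(-D)=c(D)-\max_b d(D_b)$, the bound $d(D)+d(-D)\leq c(D)-1$ is equivalent to your claim that $\max_b d(D_b)\geq \min_b d(D_b)+1$ whenever $c(D)\geq 1$, and your base-point-sliding argument (only the crossing being traversed toggles its warping status, so the labeling changes by exactly one across an under-passage and hence cannot be constant) is the standard proof of that fact, consistent with the $\pm 1$ jump behaviour already used in the proof of Proposition \ref{lem-almalt}. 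The trade-off is clear: the paper's version is shorter but leans on an external reference, while yours is longer but makes the lemma independent of \cite{warping-3}; the one point you rightly flag as delicate --- that no other crossing changes status when the base point crosses a single under-passage --- is exactly the content of the cited result, and your verification of it is sound. The deduction $\min\{m,\ c(D)-M\}\leq\bigl(c(D)-(M-m)\bigr)/2\leq (c(D)-1)/2$ and the passage to $d(P)$ via the identity $\overline{d}(D)=d(P)$ for alternating $D$ both match the paper.
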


\begin{proof}
By the inequality $d(D)+d(-D) \leq c(D)-1$ shown in \cite{warping-3}, we have $2 \overline{d}(D) \leq c(D)-1$, and therefore $\overline{d}(D) \leq (c(D)-1)/2$. 
Since this also holds for alternating diagrams $D$, we have $d(P) \leq (c(P)-1)/2$. 
\end{proof}

\noindent The {\it span}, $s(D)$, of an oriented knot diagram $D$ is defined to be the difference $s(D)= \max_b d(D_b) - \min_b d(D_b)$.
By definition, we have $s(D)=s(-D)$. 
Also, we have the following. 

\begin{lem}{\cite{warping-p}}
The span $s(D)$ is one if and only if $D$ is an alternating diagram.
\label{lem-spn}
\end{lem}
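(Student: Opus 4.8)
The plan is to reduce the statement to a simple observation about sequences of $\pm 1$ steps, by first understanding exactly how the warping degree labeling changes between two adjacent edges of $D$. The key step I would establish is the following local change formula: traveling along $D$ with its orientation, if $e$ and $e'$ are consecutive edges separated by a crossing $c$, then $d(D_{e'})-d(D_{e})=+1$ when the strand $e\to e'$ passes over at $c$, and $d(D_{e'})-d(D_{e})=-1$ when it passes under. This is the combinatorial heart of the lemma, and everything else follows formally from it.

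To prove the local change formula I would argue that moving the base point forward past $c$ changes the first-passage type of only the crossing $c$ itself. For any other crossing $c'$, both of its passages sit at fixed points of the knot, and since $e$ and $e'$ are separated precisely by $c$, the small forward move of the base point crosses neither passage of $c'$; hence the cyclic order in which the two passages of $c'$ are met is unchanged, and so is the warping status of $c'$. For $c$ itself I would check the two cases directly: if the strand $e\to e'$ passes over at $c$, then from $e$ the crossing $c$ is first met as an over-crossing (non-warping), whereas from $e'$ it is first met only later along the under-strand (warping), so its contribution to the warping count rises from $0$ to $1$; the under case is symmetric and lowers the contribution from $1$ to $0$. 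This yields the claimed signs $+1$ and $-1$.

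Granting the local formula, I would read off the global consequence. Traversing $D$ once produces a cyclic sequence of $2c(D)$ labels in which each consecutive step is $+1$ or $-1$, the sign being $+1$ at each over-passage and $-1$ at each under-passage; the span $s(D)$ is exactly the difference between the maximum and minimum of this sequence. Since every step is $\pm 1$ and at least one crossing is present, the labels take at least two values, so $s(D)\ge 1$. Moreover, the labels take exactly two (necessarily consecutive) values if and only if the steps strictly alternate between $+1$ and $-1$: if all labels lie in $\{m,m+1\}$ then from $m$ the only admissible step is $+1$ and from $m+1$ the only admissible step is $-1$, forcing alternation, and conversely strictly alternating steps confine the labels to a two-element set. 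Thus $s(D)=1$ holds precisely when the signs of the steps alternate.

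Finally, I would translate the alternation of signs back into a property of $D$. By the local change formula the signs alternate between $+1$ and $-1$ exactly when the passages alternate between over and under as one travels along the knot, which is by definition the statement that $D$ is alternating; combined with the previous paragraph this gives $s(D)=1$ if and only if $D$ is alternating, proving both directions at once. I expect the main obstacle to be the careful verification of the local change formula, in particular the orientation bookkeeping that pins down the sign of the change at $c$ together with the invariance of the first-passage type at every other crossing. Once that lemma is secured, the remaining argument is the elementary characterization of two-valued cyclic $\pm 1$ sequences given above.
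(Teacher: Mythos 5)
The paper does not prove this lemma at all; it is quoted as a known result from the reference \cite{warping-p}, so there is no in-paper argument to compare against. Your proof is correct and is essentially the standard one: the local change formula ($+1$ across an over-passage, $-1$ across an under-passage) is exactly the mechanism behind the ``warping degree labeling'' the paper uses throughout (e.g.\ in Figure \ref{fig-wdl} and in the proof of Proposition \ref{lem-almalt}), and your reduction to two-valued cyclic $\pm 1$ sequences cleanly yields both directions, with the $c(D)\geq 1$ hypothesis correctly flagged as the only edge case.
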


\noindent Using the above lemmas, we show Proposition \ref{lem-wd9}. \\

\noindent {\it Proof of Proposition \ref{lem-wd9}}. 
We will show that there exist no non-alternating diagrams $D$ with $|D|=P$ such that $\overline{d}(D) > d(P)$ under the condition. 
We will proceed by analyzing cases according to the number of crossings of $P$. \\
Case 1: Suppose $c(P)=3$ or $4$. We have $d(P)=1$ by Lemma \ref{lem-dc}, and there are no diagrams $D$ with $|D|=P$ satisfying $\overline{d}(D)>1$ by Lemma \ref{lem-dc}. \\
Case 2: Suppose $c(P)=5$ or $6$. We have $d(P)=2$ by Lemmas \ref{lem-d2}, \ref{lem-dc}, and there are no diagrams $D$ satisfying $\overline{d}(D) >2$ by Lemma \ref{lem-dc}. \\
Case 3: Suppose $c(P)=7$. We have $d(P)=2$ or $3$ by Lemmas \ref{lem-d2}, \ref{lem-dc}. 
There are no diagrams $D$ with $|D|=P$ satisfying $\overline{d}(D) >3$ by Lemma \ref{lem-dc}. 
When $d(P)=2$, each oriented alternating diagram $D^A$ with $|D^A|=P$ has warping degree labeling consisting of the numbers $2, 3$ or $4,5$. 
Here, recall that each alternating knot diagram has warping degree labeling consisting of two labels by Lemma \ref{lem-spn}. 
There are no nonalternating diagrams $D$ with $|D|=P$ such that the minimum label is greater than $2$ and the maximum label is smaller than $5$, because $s(D)>1$ by Lemma \ref{lem-spn}. 
Hence, we have $\overline{d}(D) \leq d(P)$ for any $D$ with $|D|=P$ in this case. \\
Case 4: Suppose $c(P)=8$. We have $d(P)=2$ or $3$ by Lemmas \ref{lem-d2}, \ref{lem-dc}. 
When $d(P)=2$, $P$ is one of the four knot projections depicted in Figure \ref{fig-4proj} by Lemma \ref{lem-d2}. 
For each knot projection $P$, we have a diagram $D$ with $|D|=P$ such that $\overline{d}(D)=3 > 2=d(P)$ as shown in Figure \ref{fig-4diag}. 
\begin{figure}[h]
\centering
\includegraphics[width=80mm]{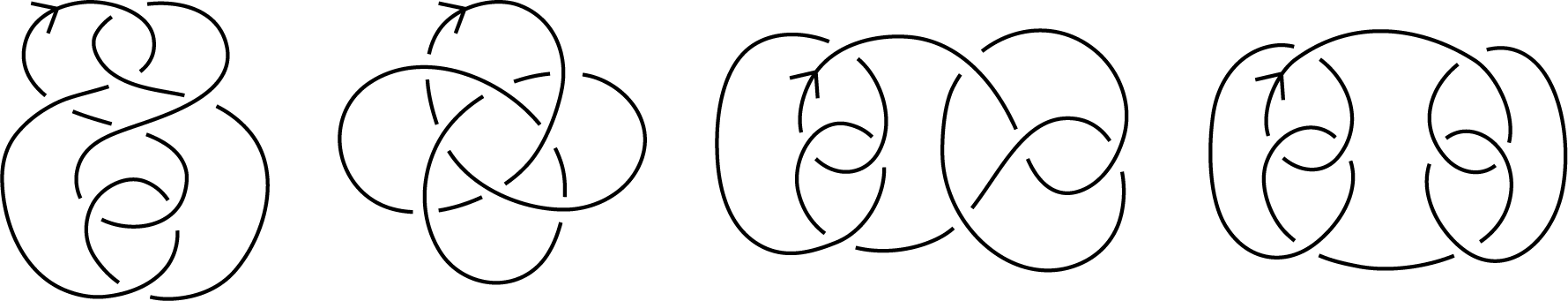}
\caption{Diagrams of warping degree three.}
\label{fig-4diag}
\end{figure}
When $d(P)=3$, the warping degree labeling on each oriented alternating diagram $D^A$ with $|D^A|=P$ is consisting of $3,4$ or $4,5$, and there are no nonalternating diagram $D$ such that the minimum label is greater than $3$ and the maximum label is less than $5$ since $s(D)>1$ by Lemma \ref{lem-spn}. \\
Case 5: Suppose $c(P)=9$. We have $d(P)=3$ or $4$ by Lemmas \ref{lem-d2}, \ref{lem-dc}. 
There are no diagrams $D$ such that $\overline{d}(D)>4$ by Lemma \ref{lem-dc}. 
When $d(P)=3$, each oriented alternating diagram $D^A$ has warping degree labeling consisting of $3,4$ or $5,6$. 
There are no nonalternating diagrams $D$ such that the minimum label is greater than $3$ and the maximum label is less than $6$ since $s(D)>1$ by Lemma \ref{lem-spn}. \hfill$\square$ \\

\subsection{Welded unknotting number}
\label{subsection-welded}

In this subsection, we apply results in Subsection \ref{subsection-wd} to a study of welded knot diagrams. 
A {\it welded knot diagram} is a diagram obtained from a classical knot diagram by replacing some classical crossings with ``welded crossings''. 
A {\it welded knot} (\cite{fenn, rourke}) is the equivalence class of welded knot diagrams related by classical and virtual Reidemeister moves and the over forbidden move. 
The {\it welded unknotting number}, $u_w (D)$, of a welded knot diagram $D$ is the minimum number of classical crossings which are needed to be replaced with a welded crossing to obtain a diagram of the trivial knot (\cite{non-tri}). 
Let $D$ be a classical knot diagram. 
It is known that $u_w (D) \leq \overline{d}(D)$ (\cite{unknotting-w, non-tri, s-virtual}), as a relation between the welded unknotting number and the warping degree. 
On the other hand, the inequality $I(P) \leq d(P) \leq c(P)-I(P)$, the relation between the warping degree and the isolate-region number, was shown in \cite{warping-3}. 
For a knot diagram $D$, let $I(D)=I(P)$, where $P=|D|$. 
From Propositions \ref{lem-almalt}, \ref{lem-wd9}, we have the following corollaries which are estimations of the welded unknotting number in terms of isolated regions. 

\begin{cor}
Let $D$ be an almost alternating diagram. 
We have $u_w (D) \leq c(D)-I(D)$. 
\label{cor-almalt}
\end{cor}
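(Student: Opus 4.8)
The plan is to obtain the stated bound by concatenating a short chain of inequalities, all of which are either cited in this subsection or proved earlier in the paper. The three links in the chain are: the known relation $u_w(D) \leq \overline{d}(D)$ for a classical knot diagram (cited from \cite{unknotting-w, non-tri, s-virtual}); the inequality $\overline{d}(D) \leq d(P)$ coming from Proposition \ref{lem-almalt}, which applies precisely because $D$ is assumed almost alternating; and the inequality $d(P) \leq c(P) - I(P)$ from \cite{warping-3} relating warping degree and isolate-region number.

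Concretely, first I would set $P = |D|$ and record that since $D$ is almost alternating, Proposition \ref{lem-almalt} yields $\overline{d}(D) \leq d(P)$. Next I would invoke the welded-to-warping bound $u_w(D) \leq \overline{d}(D)$. Finally I would apply the upper estimate $d(P) \leq c(P) - I(P)$. Stringing these together gives
\begin{align*}
u_w(D) \leq \overline{d}(D) \leq d(P) \leq c(P) - I(P).
\end{align*}

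It then remains to translate the right-hand side from the projection $P$ back to the diagram $D$. For this I would note two identifications: forgetting the crossing information does not change the number of crossings, so $c(P) = c(D)$; and by the convention $I(D) = I(P)$ introduced just before the corollary. Substituting these gives $c(P) - I(P) = c(D) - I(D)$, and hence $u_w(D) \leq c(D) - I(D)$, as claimed.

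I do not expect any genuine obstacle here: the statement is a corollary whose content is entirely the assembly of previously established facts. The only points requiring care are verifying that each cited inequality is applied in the correct form (in particular that the almost-alternating hypothesis is exactly what licenses the use of Proposition \ref{lem-almalt}, and that only the inequality $\overline{d}(D) \leq d(P)$ is needed rather than the sharper monogon-free equality), and confirming the two bookkeeping identifications $c(P) = c(D)$ and $I(D) = I(P)$ so that the final bound is phrased in terms of $D$ rather than $P$.
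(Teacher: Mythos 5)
Your proposal is correct and matches the paper's (implicit) argument exactly: the corollary is obtained by chaining $u_w(D) \leq \overline{d}(D)$, the almost-alternating bound $\overline{d}(D) \leq d(P)$ from Proposition \ref{lem-almalt}, and $d(P) \leq c(P)-I(P)$ from \cite{warping-3}, together with the identifications $c(P)=c(D)$ and $I(D)=I(P)$. No issues.
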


\begin{cor}
Let $D$ be an irreducible knot diagram of $c(D) \leq 9$ such that $|D|$ is not one of the projections in Figure \ref{fig-4proj}. 
We have $u_w (D) \leq c(D)-I(D)$. 
\label{cor-wd9}
\end{cor}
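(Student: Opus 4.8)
The plan is to chain together three inequalities already assembled in this section, each contributing one link in the desired bound $u_w(D) \le c(D) - I(D)$. Writing $P = |D|$ for the underlying projection, the three ingredients are: the welded-unknotting bound $u_w(D) \le \overline{d}(D)$ recalled at the start of Subsection \ref{subsection-welded}; the projection comparison $\overline{d}(D) \le d(P)$ furnished by Proposition \ref{lem-wd9}; and the warping/isolate-region inequality $d(P) \le c(P) - I(P)$ from \cite{warping-3}. The whole argument is the assembly of these pieces under the stated hypotheses.

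First I would invoke $u_w(D) \le \overline{d}(D)$, which is valid for any classical knot diagram $D$. Next, since $D$ is irreducible with $c(D) \le 9$ and $P = |D|$ is by assumption \emph{not} one of the four exceptional projections of Figure \ref{fig-4proj}, Proposition \ref{lem-wd9} applies directly and yields $\overline{d}(D) \le d(P)$. Finally, applying $d(P) \le c(P) - I(P)$ and using that the crossing number and isolate-region number of a diagram agree with those of its projection, namely $c(D) = c(P)$ and $I(D) = I(P)$ by definition, I obtain the composite bound
\begin{align*}
u_w(D) \le \overline{d}(D) \le d(P) \le c(P) - I(P) = c(D) - I(D),
\end{align*}
which is exactly the claim.

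The step carrying all the weight is the application of Proposition \ref{lem-wd9}: it is precisely here that the hypothesis excluding the four projections of Figure \ref{fig-4proj} is needed, since for those projections one can build diagrams with $\overline{d}(D) > d(P)$, making the middle inequality fail. The other two inequalities hold unconditionally. Thus I expect no genuine obstacle: the argument reduces to checking that the hypotheses of Proposition \ref{lem-wd9} are met and then reading off the chain, the only care required being to match the conventions $c(D) = c(|D|)$ and $I(D) = I(|D|)$ so that the final equality is legitimate.
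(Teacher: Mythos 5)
Your proposal is correct and matches the paper's (implicit) argument exactly: the paper derives Corollary \ref{cor-wd9} by chaining $u_w(D) \leq \overline{d}(D)$, Proposition \ref{lem-wd9}, and the inequality $d(P) \leq c(P) - I(P)$ from \cite{warping-3}, with the conventions $I(D)=I(|D|)$ and $c(D)=c(|D|)$. You also correctly identify that the exclusion of the four projections in Figure \ref{fig-4proj} is needed precisely to invoke Proposition \ref{lem-wd9}.
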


\section{Region-connect graph and its complement}
\label{section-graph}

In this section, region-connect graph and region-disconnect graph are defined and investigated. 
Then an algorithm to find the isolate-region number is given. 
For a graph $G$, we say a graph $G'$ is a {\it subgraph} of $G$ and denote by $G' \subseteq G$ if $G'=G$ or $G'$ is obtained from $G$ by deleting some vertices and edges.
For a link projection $L$ with $n$ regions, give labels $x_1, x_2, \dots , x_n$ to the regions. 
The {\it region-connect graph} $G$ of $L$ is the graph with the vertices labeled $x_1, x_2, \dots ,x_n$ and the edges $x_{\alpha} x_{\beta}$ for all the pairs of connected regions $x_{\alpha}$ and $x_{\beta}$ (see Figure \ref{fig-7crossings-gb}).
The {\it region-disconnect graph}, $\overline{G}$, of $L$ is the complement graph of $G$. 
By definition, $\overline{G}$ represents which pairs of regions are disconnected. 
Then we can find the isolated-region sets of a link projection $L$ by looking at $\overline{G}$ as follows. 
Find a subgraph of $\overline{G}$ which is a complete graph $K_r$. 
Since the corresponding $r$ regions to the vertices of $K_r$ are disconnected each other in $L$, the set of them is isolated. 
Moreover, the maximum size of a complete graph $K_r \subseteq \overline{G}$ is identical to the isolate-region number $I(K)$. 
For example, we obtain $I(K)=3$ for the knot projection $K$ in Figure \ref{fig-7crossings-gb} because the graph $\overline{G}$ includes $K_3$ and does not include $K_r$ for $r \geq 4$.

\begin{figure}[h]
\centering
\includegraphics[width=80mm]{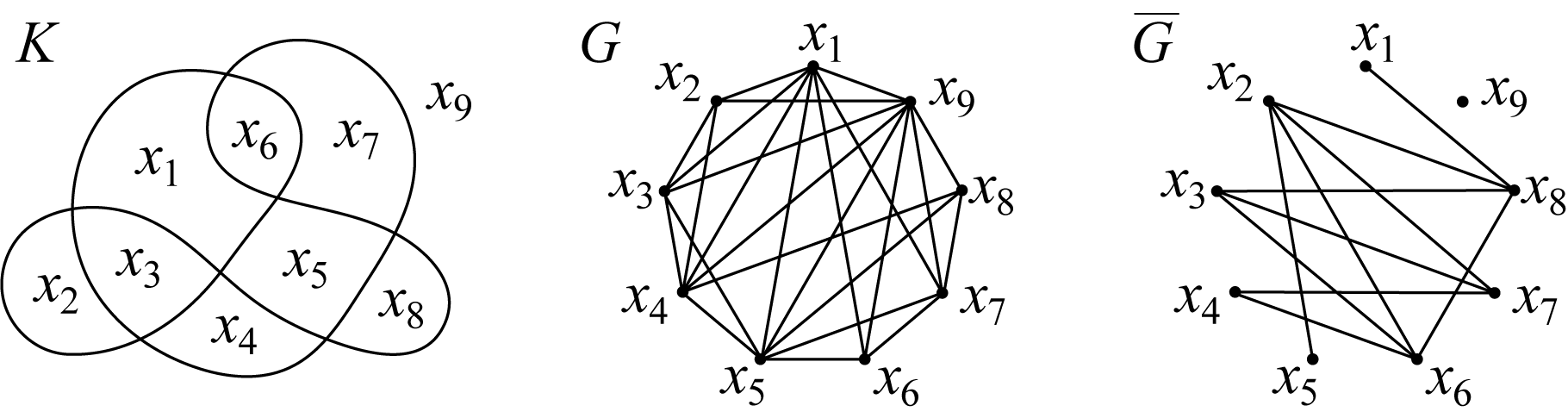}
\caption{A knot projection $K$ and its graphs $G$ and $\overline{G}$. }
\label{fig-7crossings-gb}
\end{figure}

\section{I-generating function}
\label{section-function}

In this section, we explore the generating functions for isolate-region sets of a link projection. 
Let $L$ be a link projection. 
Let $R= \{ x_1, x_2, \dots ,x_n \}$ be the set of all regions of $L$. 
Let $P(R)$ be the power set of $R$. 
Now suppose we want to know how isolated-region sets are distributed in $P(R)$. 
First, each set in $P(R)$ contains a set of regions from $R$. 
Hence, all the elements in $P(R)$ can be seen as the terms of the following polynomial. 

\begin{align*}
h &=(x_1^0 +x_1^1)(x_2^0 +x_2^1)(x_3^0 +x_3^1) \cdots (x_n^0 + x_n^1) \\
&=(1+x_1)(1+x_2)(1+x_3) \cdots (1+x_n) \\
&= 1 + x_1 + x_2 + \cdots +x_1 x_2 + x_1 x_3 + \cdots + x_1 x_2 x_3 + \cdots +x_1 x_2 \dots x_n ,
\end{align*}

\noindent where $x_i^1$ or $x_i^0$ denotes if the region $x_i$ is chosen or not respectively. 
For $h$, delete the terms which include the pair of connected regions, i.e., the pair of the letters which are connected in $G$. 
Then we obtain a polynomial $g$ which represents all the isolated-region sets. 
When we want to know just the number of isolated-region sets, replace each $x_i$ with $x$. 
Then, we obtain the generating function $f(L)$ for the isolated-region sets. 
In this paper, we call $f(L)$ the {\it I-generating function} of $L$. 
By definition, the maximum degree, $\text{maxdeg}f(L)$, indicates the isolate-region number $I(L)$ of $L$. 
We have the following proposition. 

\vspace{3mm}
\begin{prop}
For each I-generating function $f(L)=a_0 +a_1 x +a_2 x^2 +a_3 x^3 + \dots + a_r x^r$ of a connected link projection $L$ with $c$ crossings, $a_0 =1$ and $a_1 = c+2$. 
All the terms from degree 0 to $\text{maxdeg}f(L)=r$ are nonzero. 
\label{prop-func}
\end{prop}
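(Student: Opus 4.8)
The plan is to read off each coefficient $a_k$ of $f(L)$ as the number of isolated-region sets of cardinality $k$; this is immediate from the construction, since substituting $x_i = x$ into the polynomial $g$ collapses each squarefree monomial $x_{i_1} \cdots x_{i_k}$ (which survives in $g$ precisely when $\{x_{i_1}, \dots, x_{i_k}\}$ is isolated) to $x^k$. With this dictionary in hand, the three assertions become elementary combinatorial counts.

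First I would handle $a_0$ and $a_1$ directly. The only cardinality-zero subset of $R$ is $\emptyset$, which is isolated by convention, so $a_0 = 1$. For $a_1$, every singleton $\{x_i\}$ is vacuously isolated, since there is no pair of distinct regions to test; hence $a_1$ equals the total number of regions $n$. Because $L$ is connected, the region-count fact recalled in the introduction gives $n = c + 2$, and therefore $a_1 = c + 2$.

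The remaining claim --- that $a_k > 0$ for every $0 \le k \le r$, where $r = \text{maxdeg}f(L) = I(L)$ --- I would deduce from a single structural observation: the family of isolated-region sets is closed under taking subsets. Indeed, if $W$ is isolated, then every pair of regions in $W$ is disconnected, so the same holds for every pair in any subset $W' \subseteq W$; thus $W'$ is again isolated. By the definition of $I(L)$ there exists an isolated-region set $W$ with $|W| = r$. For each $k$ with $0 \le k \le r$, choosing any $k$-element subset of $W$ produces an isolated-region set of cardinality $k$, so $a_k \ge 1$. Combined with $\text{maxdeg}f(L) = r$, this shows that $a_0, a_1, \dots, a_r$ are all nonzero.

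I expect no serious obstacle; the content is entirely elementary once the coefficient interpretation is fixed. The one point that deserves to be stated cleanly, rather than a genuine difficulty, is the hereditary (downward-closed) property of isolated-region sets, since it is exactly what forces the coefficient sequence to have no internal gaps. Everything else reduces to the definition of isolatedness together with the standard relation between the numbers of regions and crossings of a connected link projection.
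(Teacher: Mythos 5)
Your proposal is correct and follows essentially the same route as the paper: you read each coefficient $a_k$ as the count of isolated-region sets of cardinality $k$, get $a_0=1$ and $a_1=c+2$ from the empty set and the region count of a connected projection, and derive the absence of gaps from the fact that subsets of isolated sets are isolated --- which is exactly the paper's observation that $K_r \supseteq K_{r-1}$ in the region-disconnect graph, just phrased set-theoretically instead of graph-theoretically.
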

\vspace{3mm}

\begin{proof}
We notice that each coefficient $a_j$ is the number of isolated-region sets consisting of $j$ regions. 
In terms of the region-disconnect graphs $\overline{G}$, $a_j$ is the number of complete graphs of $j$ vertices that are subgraph of $\overline{G}$. 
The first claim is obvious from the fact that we have one $\emptyset$ and the number of regions of $L$ is $c+2$. 
For the second claim, when $\overline{G}$ includes a complete graph $K_r$ ($r \geq 2$), then $\overline{G}$ also includes $K_{r-1}$ because $K_r \supseteq K_{r-1}$. 
\end{proof}
\vspace{3mm}

\noindent For connected irreducible link projections, we have an upper and lower bounds for $a_2$ as follows\footnote{We note that Proposition \ref{prop-a2} does not include Corollary \ref{cor-4-2} because the proposition claims just $a_2 \geq (c^2 -5c+2)/2=-2$ when $c=4$.}.

\vspace{3mm}
\begin{prop}
Let $L$ be a connected irreducible link projection with $c$ crossings and let $f(L)=a_0 + a_1 x + a_2 x^2 + \dots + a_r x^r$ be the I-generating function of $L$. We have 
\begin{align*}
\frac{c^2 -5c+2}{2} \leq a_2 \leq \frac{c^2 +3c+2}{2}.
\end{align*}
\label{prop-a2}
\end{prop}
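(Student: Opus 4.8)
The plan is to translate everything into the region-connect graph $G$ of $L$ introduced in Section \ref{section-graph}. Since $a_2$ counts the isolated-region sets of size two, it counts exactly the pairs of \emph{disconnected} regions, so $a_2 = \binom{c+2}{2} - |E(G)|$, where I use that a connected projection with $c$ crossings has $c+2$ regions and that $|E(G)|$ is the number of \emph{connected} pairs of regions. The upper bound is then immediate: $a_2$ cannot exceed the total number of pairs of regions, so $a_2 \le \binom{c+2}{2} = \tfrac{c^2+3c+2}{2}$. Everything therefore reduces to the lower bound, which by the displayed identity is equivalent to the edge estimate $|E(G)| \le 4c$.

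To prove $|E(G)| \le 4c$ I would view $L$ as a $4$-valent graph $\Gamma$ on $S^2$ whose vertices are the $c$ crossings; then $\Gamma$ has $2c$ edges and $c+2$ faces, the faces being the regions of $L$. Two regions are connected precisely when some crossing $v$ lies on both their boundaries, i.e.\ when both regions occupy a corner at $v$. Because $L$ is irreducible, every crossing has exactly four distinct regions around it, and among the $\binom{4}{2}=6$ pairs they form at $v$, four pairs are \emph{adjacent} (they occupy two cyclically consecutive corners, hence share the edge of $\Gamma$ between them) and two pairs are \emph{diagonal} (opposite corners, sharing only $v$). Since every connected pair is adjacent or diagonal at the crossing it shares, the edge set of $G$ is the union of the adjacent pairs and the diagonal pairs.

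I would then bound the two families separately. Each edge of $\Gamma$ borders exactly two regions (distinct, as $L$ has no nugatory crossings) and thus determines a single adjacent pair; conversely every adjacent pair comes from such an edge, so the number of adjacent pairs is at most the number of edges, $2c$. For the diagonal pairs, each of the $c$ crossings contributes exactly two, giving at most $2c$ distinct diagonal pairs. Hence $|E(G)| \le 2c + 2c = 4c$, and substituting into $a_2 = \binom{c+2}{2} - |E(G)|$ yields $a_2 \ge \tfrac{c^2+3c+2}{2} - 4c = \tfrac{c^2-5c+2}{2}$, as required. The one place demanding care is the classification step: I must verify that irreducibility really forces four distinct regions at each crossing (so the local count of six pairs splits cleanly into four adjacent and two diagonal), and that no connected pair is omitted or double-counted when passing to the union of the two families; this case-free local analysis is the main, though modest, obstacle.
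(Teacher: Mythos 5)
Your proof is correct and rests on the same key inequality as the paper's, namely that the number of connected pairs of regions (the edges of the region-connect graph $G$) is at most $4c$; both arguments then subtract this from $\binom{c+2}{2}$, and the upper bound is handled identically. The difference is only in how the bound $|E(G)|\le 4c$ is obtained. The paper counts region by region: a $k_i$-gon has at most $2k_i$ connected neighbours, so $2|E(G)|\le\sum_i 2k_i$, and the total $\sum_i k_i=4c$ is supplied by the Adams--Shinjo--Tanaka formula quoted from \cite{AST}. You count crossing by crossing, splitting connected pairs into ``adjacent'' ones (one per edge of the underlying $4$-valent graph, at most $2c$) and ``diagonal'' ones (two per crossing, at most $2c$). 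These are the same bookkeeping --- your $2c+2c$ is exactly the paper's $\tfrac{1}{2}\sum_i 2k_i$, each region's $2k_i$ splitting as $k_i$ edge-neighbours plus $k_i$ corner-neighbours --- but your version replaces the citation of \cite{AST} by the elementary fact that a $4$-regular graph on $c$ vertices has $2c$ edges and $4c$ corners, which is more self-contained. The step you flag as the main obstacle is actually harmless: since you only need an \emph{upper} bound on $|E(G)|$, you do not need the four regions at a crossing to be distinct (coincidences only decrease the count of pairs); all that matters is that every connected pair is adjacent or diagonal at some shared crossing, which is immediate from the definitions.
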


\begin{proof}
Since $L$ has $c$ crossings, the number of regions of $L$ is $c+2$. 
Hence, we have the upper bound $\binom{c+2}{2}$, the number of edges of $K_{c+2}$ since $\overline{G} \subseteq K_{c+2}$. 
For the lower bound, let $x_1, x_2, \dots , x_{c+2}$ be the regions of $L$. 
Suppose $x_i$ is a $k_i$-gon. 
Then $x_i$ has at most $2 k_i$ regions around $x_i$. 
For each region $x_i$, the number of regions which is disconnected to $x_i$ is at least $(c+2)-2k_i -1$, where $c+2$ implies the number of all regions of $L$. 
In total, we have 
\begin{align*}
a_2 \geq \sum_{i=1}^{c+2} \frac{c+1-2k_i}{2} = \frac{(c+1)(c+2)}{2}- \sum_{i=1}^{c+2}k_i,
\end{align*}
where we divided the summation by 2 to fix the double counting. 
Using Adams, Shinjo and Tanaka's formula in \cite{AST} 
\begin{align*}
c= \frac{2C_2 +3C_3 + 4C_4 + \dots}{4},
\end{align*}
we have 
\begin{align*}
 \sum_{i=1}^{c+2} k_i =2C_2 +3C_3 + 4C_4 + \dots =4c
\end{align*}
for connected irreducible link projections, where $C_j$ denotes the number of $j$-gons. 
Hence, we have 
\begin{align*}
a_2 \geq \frac{(c+1)(c+2)}{2}-4c = \frac{c^2 -5c +2}{2}.
\end{align*}
\end{proof}

\section{I-generating functions of $(2,n)$-torus link projections}
\label{section-pf2}

In this section, we investigate the I-generating functions for the standard projections of $(2,n)$-torus links, and prove the following theorem. 

\vspace{3mm}
\begin{thm}
The I-generating function $f(L)$ of a standard projection  $L$ of the $(2,n)$-torus link $( n \geq 4 )$ is
\begin{align*}
f(L) =1+(n+2)x+ \sum _{k=2}^{\lfloor \frac{n}{2} \rfloor} \binom{n-k}{k} \frac{n}{n-k} x^k .
\end{align*}
\label{formula-T2n}
\end{thm}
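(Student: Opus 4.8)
The plan is to compute the I-generating function by passing to the region-disconnect graph $\overline{G}$ and recognizing it as a graph whose cliques are counted by a classical formula. First I would set up the combinatorics of the standard projection $L$ of the $(2,n)$-torus link. Since $c(L)=n$, there are $n+2$ regions, and I would identify them explicitly as: a central $n$-gon $I$, the $n$ bigons $b_1,b_2,\dots,b_n$ lying around it in cyclic order, and the outer region $O$. The crossing $c_i$ is the one shared by the consecutive bigons $b_i$ and $b_{i+1}$ (indices mod $n$), and the four regions meeting at $c_i$ are exactly $I$, $b_i$, $b_{i+1}$, and $O$.

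Next I would read off the adjacencies. The central region $I$ lies on all $n$ crossings, so it is connected (in the sense of sharing a crossing) to every other region; the same holds for the outer region $O$. Each bigon $b_i$ lies only on the two crossings $c_{i-1}$ and $c_i$, so it is connected precisely to $b_{i-1}$, $b_{i+1}$, $I$, and $O$. Translating to the region-disconnect graph $\overline{G}$, this says that $I$ and $O$ are isolated vertices of $\overline{G}$, while on the bigons $\overline{G}$ is exactly the complement $\overline{C_n}$ of the $n$-cycle. Because an isolated vertex lies in no clique of size $\geq 2$, every isolated-region set of size at least two consists solely of bigons, and it is a clique of $\overline{C_n}$, i.e.\ an independent set of the cycle $C_n$. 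Thus, for $k\geq 2$, the coefficient $a_k$ equals the number of $k$-element independent sets of $C_n$.

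The remaining step is to invoke the classical enumeration of independent sets of a cycle: the number of ways to choose $k$ pairwise non-adjacent vertices from $n$ vertices arranged in a circle is $\frac{n}{n-k}\binom{n-k}{k}$, valid for $0\le k\le\lfloor n/2\rfloor$ and zero beyond. Combining this with $a_0=1$ and $a_1=n+2$, which hold by Proposition \ref{prop-func} (the empty set, together with the $n+2$ singletons), yields
\begin{align*}
f(L)=1+(n+2)x+\sum_{k=2}^{\lfloor n/2\rfloor}\binom{n-k}{k}\frac{n}{n-k}\,x^k,
\end{align*}
as claimed, with $\operatorname{maxdeg}f(L)=\lfloor n/2\rfloor=I(L)$ matching the maximum independent set of $C_n$.

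The main obstacle is not the counting identity itself but the careful verification of the adjacency structure of $\overline{G}$, namely that non-consecutive bigons are genuinely disconnected and that $I,O$ are universal in $G$; this is where the hypothesis $n\geq 4$ matters, since for $n=3$ all bigons are mutually connected and the complement $\overline{C_3}$ is edgeless, so the sum would be empty and the generic description would degenerate. I would therefore treat the figure of the standard projection explicitly to pin down the four regions at each crossing, and then either cite the cycle independent-set formula as standard or derive it quickly (for instance from the recursion relating placements on a circle to placements on a path), which also dovetails with the recursive relationship promised for these projections.
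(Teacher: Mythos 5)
Your proposal is correct and follows essentially the same route as the paper: identify the central and outer regions as isolated vertices of $\overline{G}$, recognize the induced graph on the $n$ bigons as the complement of the cycle $C_n$, and reduce the coefficients $a_k$ for $k\geq 2$ to counting $k$-element independent sets of $C_n$. The only difference is that you cite the formula $\frac{n}{n-k}\binom{n-k}{k}$ as classical, while the paper derives it by fixing a vertex of the independent set, applying a non-adjacent-selection lemma on the remaining path of $n-3$ vertices, and dividing by $k$ to correct the overcount.
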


\noindent For a standard projection $T(2,n)$ of a $(2,n)$-torus link, we give labels of regions $x_1, x_2, \dots , x_{n+2}$ as shown in Figure \ref{fig-2n}.

\begin{figure}[h]
\centering
\includegraphics[width=40mm]{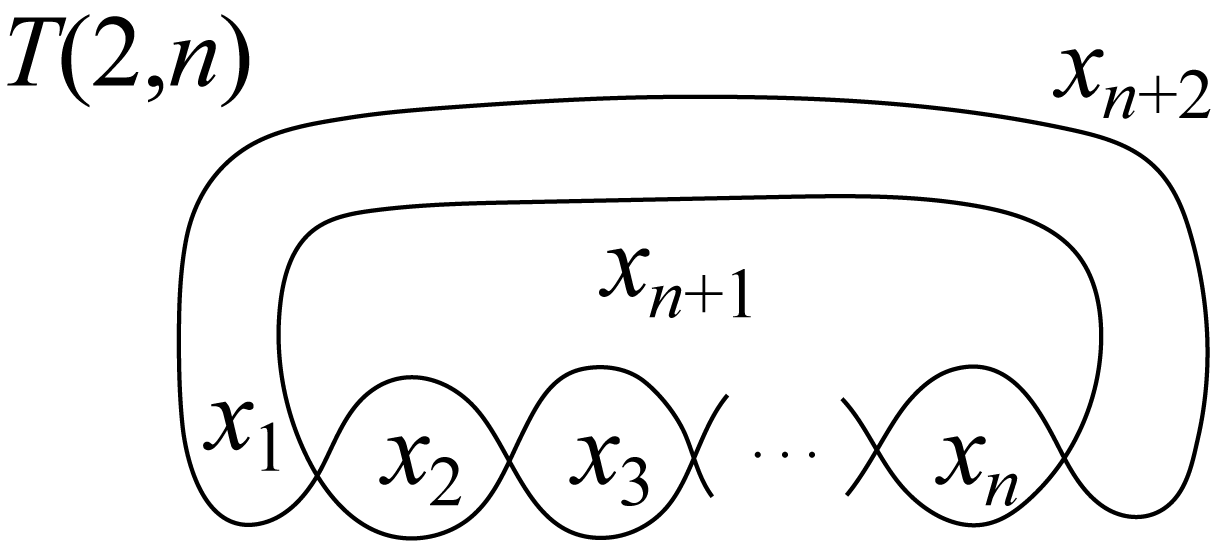}
\caption{A standard projection $T(2,n)$ of a $(2,n)$-torus link with regions $x_1, x_2, \dots , x_{n+2}$.}
\label{fig-2n}
\end{figure}

\noindent We notice that the regions $x_{n+1}$ and $x_{n+2}$ are connected with all the other regions. 
Also, when $n \geq 3$, each region $x_i$ for $1 \leq i \leq n$ is connected to the four regions, $x_{n+1}$, $x_{n+2}$, and the neighboring two bigons. 
In the region-connect graph $G$, therefore, the vertices $x_{n+1}$ and $x_{n+2}$ have degree $(n+1)$, and the others have degree four. 
As for $\overline{G}$, the vertices $x_{n+1}$ and $x_{n+2}$ have degree zero, and the others have degree $(n-3)$. 
Moreover, in $\overline{G}$, the vertices $x_1, x_2, \dots , x_n$ forms a complete graph $K_n$ missing the following $n$ edges, $x_1 x_2, x_2 x_3, x_3 x_4, \dots , x_n x_1$. 
For example, when $n=5$, we have $G$ and $\overline{G}$ as shown in Figure \ref{fig-25}, and the polynomials $h=(1+x_1)(1+x_2) \dots (1+x_7)$, $g= 1+ x_1 +x_2 + \dots + x_7 + x_1 x_3 + x_3 x_5 +x_5 x_2 + x_2 x_4 + x_4 x_1$, and $f=1+7x+5x^2$. 

\begin{figure}[h]
\centering
\includegraphics[width=90mm]{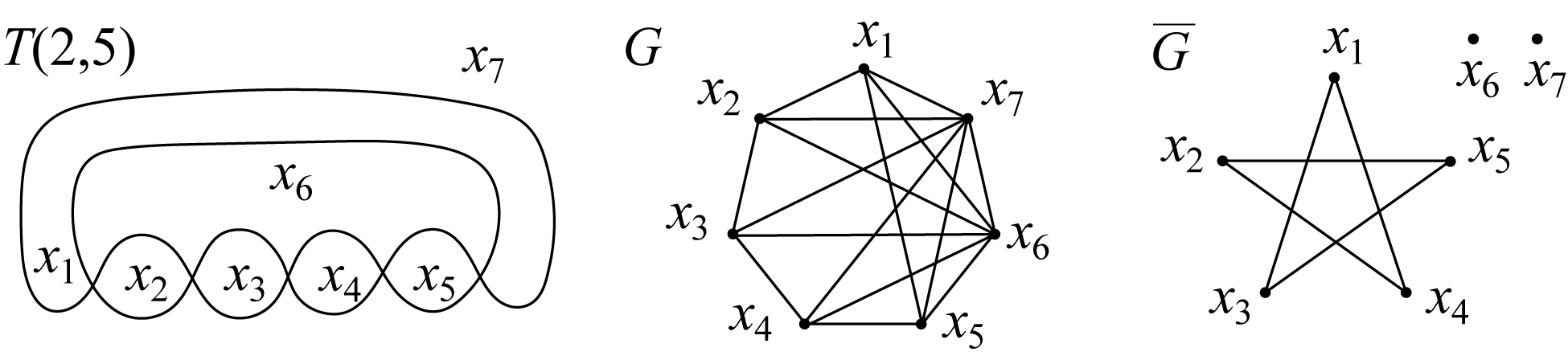}
\caption{$T(2,5)$ and its $G$ and $\overline{G}$.}
\label{fig-25}
\end{figure}

\noindent To prove Theorem \ref{formula-T2n}, we prepare a (perhaps well-known) formula. 

\vspace{3mm}
\begin{lem}
Suppose there are $m$ white balls in a line. 
Denote by $\left( \binom{m}{n} \right)$ the number of cases to paint $n$ of $m$ balls red so that the painted balls are not next to each other. Then, 
\begin{align*}
\left( \binom{m}{n} \right) = \binom{m-n+1}{n}.
\end{align*}
\end{lem}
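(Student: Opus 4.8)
The plan is to count the ``no two adjacent'' selections directly by a bijection, since the quantity $\left(\binom{m}{n}\right)$ is precisely the number of $n$-element subsets of $\{1,2,\dots,m\}$ containing no two consecutive integers. First I would reformulate: a valid selection is a choice of positions $1 \le p_1 < p_2 < \dots < p_n \le m$ such that $p_{j+1} - p_j \ge 2$ for all $j$. The key idea is that the ``gap $\ge 2$'' constraint can be absorbed by a change of variables that turns the count into an \emph{unconstrained} choice of $n$ positions from a smaller pool.

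\begin{proof}
The quantity $\left(\binom{m}{n}\right)$ counts the $n$-subsets $\{p_1 < p_2 < \dots < p_n\}$ of $\{1,2,\dots,m\}$ with $p_{j+1} \ge p_j + 2$ for each $j=1,\dots,n-1$. Define new variables by
\begin{align*}
q_j = p_j - (j-1), \qquad j=1,2,\dots,n.
\end{align*}
The map $(p_1,\dots,p_n) \mapsto (q_1,\dots,q_n)$ is a bijection from the set of admissible selections onto the set of strictly increasing $n$-tuples $q_1 < q_2 < \dots < q_n$ drawn from $\{1,2,\dots,m-n+1\}$: the adjacency condition $p_{j+1} \ge p_j + 2$ becomes $q_{j+1} \ge q_j + 1$, i.e.\ strict increase with no further constraint, while $1 \le p_1$ and $p_n \le m$ become $1 \le q_1$ and $q_n \le m-n+1$. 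Hence the number of admissible selections equals the number of unconstrained $n$-subsets of an $(m-n+1)$-element set, which is $\binom{m-n+1}{n}$.
\end{proof}

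The step I expect to require the most care is verifying that the shift map is genuinely a bijection onto the stated range rather than merely an injection: one must check both that every admissible $(p_j)$ maps into $\{1,\dots,m-n+1\}$ and, conversely, that every strictly increasing tuple in that range pulls back to an admissible spaced-out tuple via $p_j = q_j + (j-1)$. An alternative I would keep in reserve is a ``stars and bars'' argument: writing the selection as a sequence of one red ball and at least one intervening white ball between consecutive reds, distribute the remaining $m-n$ white balls into the $n+1$ gaps subject to the interior-gap constraints. Both routes yield $\binom{m-n+1}{n}$, and the change-of-variables bijection is the cleaner of the two to write out. This lemma will then feed into the proof of Theorem \ref{formula-T2n}, where the ``balls in a line'' become the regions $x_1,\dots,x_n$ arranged in the cyclic pattern of $\overline{G}$; the main subtlety there, handled separately, will be passing from the linear count to the cyclic one so as to recover the factor $\tfrac{n}{n-k}$.
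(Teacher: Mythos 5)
Your proof is correct, but it takes a different (equally standard) route from the paper. You count admissible selections via the shift bijection $p_j \mapsto q_j = p_j - (j-1)$, which converts the spacing condition $p_{j+1} \ge p_j + 2$ into an unconstrained strictly increasing $n$-tuple in $\{1,\dots,m-n+1\}$; the verification that this is a bijection onto the stated range is exactly as you describe and is airtight. The paper instead uses the insertion (``stars and bars'') argument you mention only as a backup: lay down the $m-n$ white balls, observe that they create $m-n+1$ rooms (the gaps between consecutive white balls plus the two ends), and choose $n$ of these rooms to receive one red ball each, which automatically separates the reds. The two arguments are essentially dual descriptions of the same bijection; the paper's version is shorter to state because it needs no explicit change of variables, while yours makes the injectivity and surjectivity checks more transparent. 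Your closing remark about the linear-to-cyclic passage producing the factor $\tfrac{n}{n-k}$ correctly anticipates where this lemma is used in the proof of Theorem \ref{formula-T2n}.
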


\vspace{1mm}

\begin{proof}
At first, suppose $(m-n)$ white balls in a line. 
Then there are $(m-n+1)$ rooms between balls or on a side. 
The number of ways to put the painted $n$ red balls there is $\binom{m-n+1}{n}$.
\end{proof}

\vspace{1mm}

\noindent Now we prove Theorem \ref{formula-T2n}. \\

\noindent {\it Proof of Theorem \ref{formula-T2n}}. 
Let $\overline{G}$ be the region-disconnected graph of $T(2,n)$ with the $(n+2)$ regions labeled as in Figure \ref{fig-2n}. 
Recall that the vertices $x_{n+1}$ and $x_{n+2}$ have degree zero. 
For the subgraph of $\overline{G}$ consisting of the vertices $x_1, x_2, \dots , x_n$ and all the edges, we discuss how many complete graphs $K_k$ are included. 

Let $f=a_0 +a_1 x + a_2 x^2 + \dots + a_r x^r$ be the I-generating function. 
For the maximum degree, we have $r= \text{maxdeg}f= \lfloor \frac{n}{2} \rfloor$ because we can take $\lfloor \frac{n}{2} \rfloor$ non-neighboring vertices and cannot for $\lfloor \frac{n}{2} \rfloor +1$ from $\overline{G}$, where we say $x_i$ and $x_j$ are neighboring when $|i-j|=1$ or $|i-j|= n-1$. 

For coefficients, we have $a_0 = 1$ as the empty set of vertices. 
We have $a_1 = n+2$, the number of vertices (or $K_1$) of $\overline{G}$, as mentioned in Proposition \ref{prop-func}. 
We have $a_2 = (n-3) \frac{n}{2}$, the number of edges (or $K_2$) of $\overline{G}$. 
More precisely, fix a vertex $x_i$. 
Then $x_i$ has $n-3$ choices of vertices to form an edge in $\overline{G}$. 
Recall that $x_i$ cannot be connected with the neighboring vertices. 
Multiple $n-3$ by $n$ because we can consider the same thing for the $n$ vertices. 
Divide it by two since each edge is double counted. 
Hence, we have $a_2 = (n-3) \frac{n}{2}$. 

Next, for $k \geq 3$, we obtain $a_k = ( \binom{n-3}{k-1}) \frac{n}{k}$ in the following way. 
Fix a vertex $x_i$. 
Then $x_i$ has $n-3$ non-neighboring edges and has $( \binom{n-3}{k-1} )$ choices to take other $k-1$ vertices to form $K_k$ so that any pair of vertices is not neighboring. 
Multiple $( \binom{n-3}{k-1} )$ by $n$, and divide it by $k$ since each $K_k$ is counted $k$ times. 
Then, 

\begin{align*}
a_k &= \left( \binom{n-3}{k-1} \right) \frac{n}{k} = \binom{(n-3)-(k-1)+1}{k-1} \frac{n}{k} = \binom{n-k-1}{k-1} \frac{n}{k} \\
&= \frac{n-k}{n-k} \frac{(n-k-1)(n-k-2) \dots (n-2k+1)}{(k-1)!} \frac{n}{k} = \binom{n-k}{k} \frac{n}{n-k}.
\end{align*}

\noindent This holds when $k=2$, too. \hfill$\square$ \\

\vspace{1mm}

\noindent The I-generating functions $f_n$ of $T(2,n)$ are listed below for $n=1$ to $12$. 

\begin{align*}
f_1 &= 1 + 3x \\
f_2 &= 1 + 4x \\
f_3 &= 1 + 5x \\
f_4 &= 1 + 6x + 2x^2 \\
f_5 &= 1 + 7x + 5x^2 \\
f_6 &= 1 + 8x + 9x^2 + 2x^3 \\
f_7 &= 1 + 9x + 14x^2 + 7x^3 \\
f_8 &= 1 + 10x + 20x^2 + 16x^3 + 2x^4 \\
f_9 &= 1 + 11x + 27x^2 + 30x^3 + 9x^4 \\
f_{10} &= 1 + 12x + 35x^2 + 50x^3 + 25x^4 + 2x^5 \\
f_{11} &= 1 + 13x + 44x^2 + 77x^3 + 55x^4 + 11x^5 \\
f_{12} &= 1 + 14x + 54x^2 + 112x^3 + 105x^4 +36x^5 +2x^6
\end{align*}

\vspace{1mm}

\noindent We have the following formula. 

\vspace{1mm}

\begin{prop}
For the I-generating functions $f_n= f_n (x)$ of $T(2,n)$, we have 
\begin{align*}
f_n - f_{n-1} - x f_{n-2} = -2x^2
\end{align*}
for $n \geq 4$. 
\label{prop-recurrence}
\end{prop}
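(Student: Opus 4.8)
The plan is to prove the recurrence $f_n - f_{n-1} - x f_{n-2} = -2x^2$ for $n \geq 4$ by using the explicit formula from Theorem \ref{formula-T2n}, which expresses each coefficient $a_k$ of $f_n$ as $\binom{n-k}{k}\frac{n}{n-k}$. The key observation is that this coefficient is a well-known expression: $\binom{n-k}{k}\frac{n}{n-k}$ counts the number of ways to choose $k$ non-adjacent vertices around a cycle of length $n$, and such cyclic binomial-type quantities satisfy Pascal-like recurrences. So the natural approach is to reduce the polynomial identity to a coefficient-by-coefficient identity and verify it using a combinatorial identity on these coefficients.

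First I would denote by $a_k^{(n)}$ the coefficient of $x^k$ in $f_n$, so that $a_0^{(n)}=1$, $a_1^{(n)}=n+2$, and $a_k^{(n)} = \binom{n-k}{k}\frac{n}{n-k}$ for $k \geq 2$ (with the convention that the term vanishes once $k > \lfloor n/2 \rfloor$). Comparing coefficients of $x^k$ on both sides, the claimed identity $f_n - f_{n-1} - x f_{n-2} = -2x^2$ is equivalent to the family of scalar equations
\begin{align*}
a_k^{(n)} - a_k^{(n-1)} - a_{k-1}^{(n-2)} = \begin{cases} -2 & k=2, \\ 0 & k \neq 2, \end{cases}
\end{align*}
which I would check separately for the low degrees $k=0,1,2$ and for the generic range $k \geq 3$. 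The cases $k=0$ and $k=1$ are immediate since $a_0^{(n)}=1$ (so $1 - 1 - 0 = 0$, using $a_{-1}=0$) and $a_1^{(n)} = n+2$ (so $(n+2)-(n+1)-a_0^{(n-2)} = 1 - 1 = 0$). The case $k=2$ is where the $-2$ comes from: one computes $a_2^{(n)} = (n-3)\frac{n}{2}$ and $a_1^{(n-2)} = n$, and the arithmetic $(n-3)\frac{n}{2} - (n-4)\frac{n-1}{2} - n$ should collapse to $-2$.

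For the generic range $k \geq 3$, the heart of the argument is the combinatorial identity
\begin{align*}
\binom{n-k}{k}\frac{n}{n-k} = \binom{n-1-k}{k}\frac{n-1}{n-1-k} + \binom{n-2-(k-1)}{k-1}\frac{n-2}{n-2-(k-1)},
\end{align*}
i.e.\ $a_k^{(n)} = a_k^{(n-1)} + a_{k-1}^{(n-2)}$. The cleanest way to see this is combinatorial rather than algebraic: interpreting $a_k^{(n)}$ as the number of $k$-element independent sets in a cycle $C_n$ (equivalently, $k$ non-neighboring vertices among $x_1,\dots,x_n$ in $\overline{G}$), fix the vertex $x_n$ and split the count according to whether $x_n$ is chosen. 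If $x_n$ is not chosen, the remaining $k$ vertices form an independent set among $x_1,\dots,x_{n-1}$ arranged in a path-like pattern that matches $a_k^{(n-1)}$; if $x_n$ is chosen, its two cyclic neighbors $x_{n-1}$ and $x_1$ are forbidden, leaving $k-1$ vertices to be chosen non-adjacently from the remaining $n-3$ vertices along an open path, which matches $a_{k-1}^{(n-2)}$. I expect the main obstacle to be making this fixed-vertex decomposition rigorous at the boundary between ``cycle'' and ``path'' counts—precisely tracking how removing $x_n$ (and its neighbors) turns the cyclic adjacency constraint into a linear one, and confirming the resulting counts align with the formula indices, including the edge cases where $k$ is near $\lfloor n/2 \rfloor$ and some binomial coefficients vanish. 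Once this identity is established for $k \geq 3$, combining it with the three low-degree computations yields the full recurrence.
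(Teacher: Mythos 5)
Your overall skeleton matches the paper's proof exactly: compare coefficients, check $k=0,1,2$ by hand (your arithmetic there is correct, and it is where the $-2$ arises), and reduce the rest to the identity $a_k^{(n)} = a_k^{(n-1)} + a_{k-1}^{(n-2)}$ for $k \geq 3$. That identity is true, and the paper proves it by pure algebra: apply Pascal's rule $\binom{n-k}{k} = \binom{n-k-1}{k-1} + \binom{n-k-1}{k}$ to the leading term, regroup, and watch the factorials cancel. The gap in your proposal is in the combinatorial justification you offer instead. Fixing the vertex $x_n$ and splitting on whether it is chosen does \emph{not} produce the two terms $a_k^{(n-1)}$ and $a_{k-1}^{(n-2)}$: if $x_n$ is not chosen, the remaining vertices form an independent set in the \emph{path} on $x_1,\dots,x_{n-1}$ (the adjacency between $x_1$ and $x_{n-1}$ is absent), which is counted by $\binom{n-k}{k}$, not by the cycle count $a_k^{(n-1)} = \binom{n-1-k}{k}\frac{n-1}{n-1-k}$. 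Concretely, for $n=7$, $k=3$ your split gives $4+3=7$ while the claimed decomposition is $2+5=7$; the totals agree but the terms do not, so the bijection you describe does not prove the identity you need. You flagged the cycle-versus-path boundary as the main obstacle, but it is not a technicality to be tidied up --- it is the whole content of the step.

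To close the gap you would either (a) do what the paper does and verify $a_k^{(n)} - a_k^{(n-1)} - a_{k-1}^{(n-2)} = 0$ algebraically via Pascal's rule, or (b) carry the combinatorial argument one level deeper: write each cycle count as a sum of two path counts, $i_k(C_m) = i_k(P_{m-1}) + i_{k-1}(P_{m-3})$ with $i_k(P_m) = \binom{m-k+1}{k}$, and then reduce the cycle identity to the path recurrence $i_k(P_m) = i_k(P_{m-1}) + i_{k-1}(P_{m-2})$ applied twice. Either route works; as written, your proof of the crucial $k \geq 3$ case is not yet valid.
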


\vspace{1mm}

\begin{proof}
Let $b_k$ be the coefficient of $x^k$ of $f_n - f_{n-1} - x f_{n-2}$. 
When $k \geq 3$, the $k$th coefficient of $f_n$ is $\binom{n-k}{k} \frac{n}{n-k}$, that of $f_{n-1}$ is $\binom{n-1-k}{k} \frac{n-1}{n-1-k}$, and the $(k-1)$th coefficient of $f_{n-2}$ is $\binom{(n-2)-(k-1)}{k-1} \frac{n-2}{(n-2)-(k-1)} = \binom{n-k-1}{k-1} \frac{n-2}{n-k-1}$. 
Hence, 
\begin{align*}
b_k = \binom{n-k}{k} \frac{n}{n-k} - \binom{n-k-1}{k} \frac{n-1}{n-k-1} - \binom{n-k-1}{k-1} \frac{n-2}{n-k-1}.
\end{align*}

\noindent Using the formula $\binom{n+1}{r}= \binom{n}{r-1} + \binom{n}{r}$, we have $\binom{n-k}{k} = \binom{n-k-1}{k-1} + \binom{n-k-1}{k}$ and 

\begin{align*}
b_k &= \binom{n-k-1}{k-1} \left( \frac{n}{n-k} - \frac{n-2}{n-k-1} \right) + \binom{n-k-1}{k} \left( \frac{n}{n-k} - \frac{n-1}{n-k-1} \right) \\
&= \frac{(n-k-1)!}{(k-1)!(n-2k)!} \frac{n-2k}{(n-k)(n-k-1)} - \frac{(n-k-1)!}{k! (n-2k-1)!} \frac{k}{(n-k)(n-k-1)} \\
&= \frac{(n-k-1)!}{(k-1)!(n-2k-1)!} \frac{1}{(n-k)(n-k-1)}- \frac{(n-k-1)!}{(k-1)!(n-2k-1)!} \frac{1}{(n-k)(n-k-1)} \\
&= 0.
\end{align*}

\noindent For $k \leq 2$, $b_2 = \binom{n-2}{2} \frac{n}{n-2} - \binom{n-3}{2} \frac{n-1}{n-3} -n =-2$, $b_1 = (n+2)-(n+1)-1=0$ and $b_0 = 1-1=0$. 
Hence, we have $b_2 = -2$ and $b_k =0$ for $k \neq 2$, and $f_n - f_{n-1} - xf_{n-2} = -2 x^2$ holds for $n \geq 4$. 
\end{proof}

\vspace{1mm}

\noindent From Proposition \ref{prop-recurrence}, we have a recursive relationship about the number of isolated-region sets. 

\begin{cor}
For the I-generating function $f_n (x)$ of $T(2,n)$ of $n \geq 4$, we have 
\begin{align*}
f_n (1)=f_{n-1}(1)+ f_{n-2}(1)-2.
\end{align*}
\end{cor}

\section*{Acknowledgment}

The authors thank Professor Rama Mishra for her support and encouragement.
The second author's work was partially supported by JSPS KAKENHI Grant Number JP21K03263.

\end{document}